\crefname{condition}{condition}{conditions}
\Crefname{condition}{Condition}{Conditions}
\DeclareRobustCommand{\crefnosort}[1]{%
  \begingroup\@cref@sortfalse\cref{#1}\endgroup
}
\newtheorem{theorem}{Theorem}[section]
\newtheorem{lemma}[theorem]{Lemma}
\newtheorem{observation}[theorem]{Observation}
\newtheorem{proposition}[theorem]{Proposition}
\newtheorem{corollary}[theorem]{Corollary}
\newtheorem{conjecture}[theorem]{Conjecture}
\newtheorem{claim}{Claim}
\theoremstyle{definition}
\newtheorem{definition}[theorem]{Definition}
 \newcommand{\linkdest}[1]{\Hy@raisedlink{\hypertarget{#1}{}}}
\title{On minimally tough chordal graphs}
\author[1]{Cl\'ement Dallard}
 \author[2,3]{Blas Fern\'andez}
\author[4,6]{Gyula Y.~Katona}
\author[2,3]{Martin Milani\v{c}}
\author[4,5]{Kitti Varga}
\affil[1]{Department of Informatics, University of Fribourg, Switzerland}
\affil[2]{FAMNIT, University of Primorska, Koper, Slovenia}
\affil[3]{IAM, University of Primorska, Koper, Slovenia}
\affil[4]{Department of Computer Science and Information Theory, Budapest University of Technology and Economics, Budapest, Hungary}
\affil[5]{HUN-REN--ELTE Egerv\'{a}ry Research Group, Budapest, Hungary}
\affil[6]{HUN-REN–ELTE Numerical Analysis and Large Networks Research Group, Budapest, Hungary}
\date{}
\begin{document}

\maketitle

\begin{abstract}
Katona and Varga showed that for any rational number $t \in (1/2,1]$, no chordal graph is minimally $t$-tough,
while Katona and Khan characterized all minimally $t$-tough, chordal graphs with $t \le 1/2$.
We conjecture that no chordal graph is minimally $t$-tough for any $t>1$ and prove several results supporting the conjecture.
In particular, we show that for any $t>1/2$, no strongly chordal graph is minimally $t$-tough
and no chordal graph with a universal vertex is minimally $t$-tough.

\medskip

\noindent{\bf Keywords:} toughness, minimal toughness, chordal graph, strongly chordal graph, split graph, moplex
\end{abstract}

\section{Introduction}

In 1973, Chv\'{a}tal defined a (finite, simple, and undirected) graph $G$ to be \hbox{$t$-tough}, where $t$ is a real number, if the cardinality of each vertex set $S$ whose removal disconnects the graph is at least $t$ times the number of components of $G-S$.
The toughness of a graph $G$ is the largest real number $t$ for which $G$ is $t$-tough, whereby the toughness of complete graphs is defined as infinity \cite{article:toughness_introduction}. As informally described by Chv\'{a}tal himself, toughness ``measures in a simple way how tightly various pieces of a graph hold together''.
Toughness was introduced to generalize the notion of Hamiltonicity since Hamiltonian graphs are $1$-tough (but not every 1-tough graph is Hamiltonian). Chv\'{a}tal conjectured that there exists a real number $t_0$ such that every $t_0$-tough graph is Hamiltonian~\cite{article:toughness_introduction}.
He also proposed a stronger conjecture stating that every graph with toughness greater than $3/2$ is Hamiltonian, but this was disproved by Thomassen~\cite{article:3/2-disproof}. Thereafter it was conjectured (based on~\cite{article:2factor}) that every 2-tough graph is Hamiltonian -- but this was also disproved, this time by Bauer et al.~\cite{article:9/4}. The general conjecture, nevertheless, is still open.

A concept closely related to Chv\'{a}tal's conjecture is that of minimally $t$-tough graphs, which Broersma et al.~defined as graphs whose toughness is $t$ but the deletion of any edge decreases their toughness~\cite{article:min_tough}. 
This notion has been studied in a number of subsequent works (see, e.g.,~\cites{article:dp, article:min1toughgraphs, article:spec_graph_classes_journal, article:mintoughnessthesis, Cao2025Structure, Katona2024Minimally, Ma2023Minimum, Ma2024Structure, Zheng2024Disproof}). In particular, Katona, Solt\'esz, and Varga showed that for every positive rational number~$t$, any graph is an induced subgraph of some minimally $t$-tough graph~\cite{article:min1toughgraphs}.
While Katona and Khan~\cite{Katona2024Minimally} characterized all minimally $t$-tough, chordal graphs with $t \le 1/2$, Katona and Varga~\cite{article:spec_graph_classes_journal} showed that for any $t \in (1/2,1]$, no chordal graph is minimally $t$-tough.

\begin{theorem}[Katona and Varga~\cite{article:spec_graph_classes_journal}] \label{thm:minttough_chordal_with_t_between_1/2_and_1}
 For any rational number $t \in (1/2,1]$, there exists no minimally $t$-tough, chordal graph.
\end{theorem}

In this paper, we continue the study of minimally $t$-tough graphs.
We pose the following conjecture.

\begin{conjecture}\label{conj:chordal}
For any rational number $t > 1/2$, there exists no minimally $t$-tough, chordal graph.
\end{conjecture}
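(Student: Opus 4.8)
The plan is to argue by contradiction: suppose $G$ is a minimally $t$-tough chordal graph with $t>1/2$ and exhibit an edge whose deletion does not decrease the toughness. For a vertex set $W\subseteq V(G)$ write $c(G-W)$ for the number of connected components of $G-W$; recall that $\tau(G)=\min_W |W|/c(G-W)$ over all separators $W$, and that minimality means $\tau(G-e)<t$ for every edge $e$. I would begin with two reductions. First, $G$ is not complete (complete graphs have infinite toughness), so by the theorem of Berry and Bordat $G$ has a moplex $X$, i.e.\ a clique module whose neighborhood $S:=N(X)$ is a minimal separator; since minimal separators of a chordal graph are cliques, $K:=X\cup S$ is a clique and every vertex of $X$ is simplicial. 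Second, I would invoke the known lower bound $\delta(G)\ge\lceil 2t\rceil$ for minimally $t$-tough graphs, which for $t>1/2$ gives $\delta(G)\ge 2$; hence each simplicial vertex $v\in X$ has $\deg(v)=|N[v]|-1\ge 2$ and $N[v]\subseteq K$ spans a clique on at least three vertices.

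The crux is a local criterion for criticality. Deleting an edge increases $c(G-W)$ by at most one, and by exactly one precisely when the edge is a bridge of $G-W$; hence $\tau(G-e)<\tau(G)$ holds if and only if there is a \emph{witness} $W$ avoiding both endpoints of $e$ for which $e$ is a bridge of $G-W$ and $c(G-W)+1>|W|/t$. I would apply this to the internal edges of the clique $K$. Fix a simplicial vertex $v\in X$ and a neighbor $a\in N(v)$. For the edge $va$ to be a bridge of $G-W$, every other neighbor of $v$ must lie in $W$ (each such neighbor $b$ is adjacent to $a$, giving the detour $v\text{–}b\text{–}a$), so $W\supseteq N(v)\setminus\{a\}$ and $|W|\ge\deg(v)-1$. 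The same holds symmetrically for an edge $ab$ inside $N(v)$: the common neighbor $v$ and all remaining clique vertices are forced into $W$. Thus any witness for a clique edge of $K$ must swallow almost all of $N[v]$.

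Combining this with $t>1/2$ is where the counting should bite. Since $G$ is $t$-tough we have $c(G-W)\le|W|/t<2|W|$, so a witness creates fewer than $2|W|$ components, while forcing $\deg(v)-1$ clique vertices into $W$ makes $|W|/t$ large. The goal is to conclude $c(G-W)+1\le|W|/t$ for every candidate witness of at least one clique edge, contradicting its criticality and hence minimality. Chordality is the tool meant to make the global count tractable: because a witness must contain the clique separator $S$, the components of $G-W$ attach to $W$ through clique neighborhoods, constraining how many components a set of given size can spawn. The plan is to choose $v$ (or the moplex $X$) of minimum degree and to play the clique edges of $K$ against one another, showing that at least one of them admits no witness.

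\textbf{The main obstacle} is precisely the global bound on $c(G-W)$. The witness $W$ is pinned down locally, since it must contain nearly all of $N[v]$, but it is free everywhere else, and a malicious $W$ could in principle generate many components in a distant part of $G$ while only ``spending'' $\deg(v)-1$ vertices near $v$; disentangling the local contribution of the clique from this global component count, uniformly over all $t>1/2$, is exactly the step that does not go through in general. This is why the conjecture remains open and only restricted classes are settled: for split graphs the independent side immediately caps $c(G-W)$, for chordal graphs with a universal vertex that vertex must enter every witness and so destroys almost all bridges, and for strongly chordal graphs a strong elimination ordering yields a simple vertex whose nested neighborhoods supply the missing control on $c(G-W)$.
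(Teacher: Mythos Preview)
The statement you are attempting is \emph{Conjecture}~\ref{conj:chordal}, and the paper does not prove it; it is posed as an open problem, and the paper establishes only the special cases $t\in(1/2,1]$, strongly chordal, split, and chordal with a universal vertex. So there is no ``paper's own proof'' to compare against, and your proposal is, appropriately, not a proof either: you correctly isolate the obstruction---bounding $c(G-W)$ globally when the witness $W$ is only pinned down locally near the moplex---and you explicitly flag it as the step that does not go through.

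Your framing matches the paper's machinery closely. The paper's key technical device is \cref{lemma:sufficient_condition_for_not_minttough}: if two adjacent vertices $u,v$ share at least $2t$ common neighbors, at least $t$ of which have all their neighbors in $N(u)\cup N(v)$, then $G$ is not minimally $t$-tough. Applied at a moplex this is exactly your ``clique edge has no witness'' idea, and the paper's \cref{thm:no_mintough_chordal_graph_with_special_vertices} formalizes it as: in a minimally $t$-tough chordal graph with $t>1/2$, no moplicial vertex has a maximum neighbor or a maximum neighboring edge. This hypothesis is what supplies the missing control on components---it guarantees that the $t$ common neighbors forced into any witness $W$ can be removed from $W$ without changing $c(G-W)$, which is precisely the ``disentangling the local contribution from the global component count'' that you identify as the obstacle. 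The strongly chordal case then falls because a simple vertex is moplicial with a maximum neighbor; the universal-vertex case because the universal vertex is a maximum neighbor of any moplicial vertex; and the split case by a separate counting argument. Your diagnosis of why each subclass works is accurate, though the paper's mechanism is slightly different in detail (it does not argue via the universal vertex entering every witness, but via the maximum-neighbor criterion).

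In short: your proposal is not wrong, but it is not a proof of the conjecture, and neither is anything in the paper. What you have written is a sound outline of the paper's strategy together with an honest identification of why it stalls in general.
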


First, we provide a necessary and sufficient condition for a graph $G$ to be minimally tough. 
We then use this condition to prove \cref{conj:chordal} for several subclasses of chordal graphs.
In particular, we show that for any $t>1/2$, no strongly chordal graph is minimally $t$-tough, no split graph is minimally \hbox{$t$-tough}, and no chordal graph with a universal vertex is minimally $t$-tough. 
The interesting property of these graph classes is that they are not closed under edge deletion.
We also show that for any $t\le 1$, the only minimally $t$-tough graphs with a universal vertex are stars.

In contrast to the aforementioned result, which states that no induced subgraph can be excluded from the class of minimally tough graphs~\cite{article:min1toughgraphs}, this paper addresses the question of which induced subgraphs, if any, must necessarily be present in each minimally tough graph. 
More precisely, our results imply that for any $t>1$, every minimally $t$-tough, noncomplete graph must satisfy the following:
\begin{itemize}[topsep=3pt, itemsep=0pt]
 \item[--] it must contain a hole or an induced subgraph isomorphic to the $k$-sun for some $k\ge 3$;
 \item[--] if it contains a universal vertex, then it must contain a hole;
 \item[--] it must contain an induced $4$-cycle, or an induced $5$-cycle, or two independent edges as an induced subgraph.
\end{itemize}
These results complement that of Katona and Varga stating that for any rational number $t \in (1/2,1]$, every minimally $t$-tough graph contains a hole~\cite{article:spec_graph_classes_journal}.

\section{Preliminaries} \label{preliminaries}

In this section, we present some necessary definitions and claims. Let $\omega(G)$ denote the \emph{number of components}\footnote{Using $\omega(G)$ to denote the number of components might be confusing; most of the literature on toughness, however, uses this notation.}, $\kappa(G)$ the \emph{connectivity} and $\delta(G)$ the \emph{minimum degree} of a graph $G$. For a vertex $v$ and for a set of vertices $W$ in a graph $G$, the \emph{degree} of $v$ is denoted by $d_G(v)$, and the \emph{open neighborhood} and the \emph{closed neighborhood} of $v$ and those of $W$ are denoted by $N_G(v)$, $N_G[v]$, $N_G(W)$, and $N_G[W]$, respectively. In all cases, the subscript $G$ is dismissed whenever it does not cause confusion.

\begin{definition}
 Let $t$ be a real number. A graph $G$ is called \emph{$t$-tough} if $|S| \ge t \cdot \omega(G-S)$ holds for any vertex set $S \subseteq V(G)$ that disconnects the graph (i.e., for any $S \subseteq V(G)$ with $\omega(G-S)>1$). The \emph{toughness} of $G$, denoted by $\tau(G)$, is the largest $t$ for which G is $t$-tough, taking $\tau(K_n) = \infty$ for all $n \ge 1$.
 
 A graph $G$ is said to be \emph{minimally $t$-tough} if $\tau(G) = t$ and $\tau(G - e) < t$ for all $e \in E(G)$. 
 A graph is called \emph{minimally tough} if it is minimally $t$-tough for some real number $t$.
\end{definition}

Note that a graph is disconnected if and only if its toughness is 0.

It is not difficult to see that the toughness of any connected, noncomplete graph is a positive rational number, thus there exist no minimally tough graphs with nonpositive or with irrational toughness. Also note that every complete graph on at least two vertices is minimally $\infty$-tough.

The following claim is an easy consequence of the definition of minimally tough graphs.

\begin{proposition}[Katona, Kov\'acs, and Varga~\cite{article:dp}] \label{claim:minttoughlemma}
 Let $t$ be a positive rational number and $G$ be a minimally $t$-tough graph. For every edge $e \in E(G)$,
 \begin{itemize}[topsep=3pt, itemsep=1pt]
  \item[--] the edge $e$ is a bridge in $G$, or
  \item[--] there exists a vertex set ${S=S(e) \subseteq V(G)}$ such that
   \[ \omega(G-S) \le \frac{|S|}{t} \quad \text{and} \quad \omega \big( (G-e)-S \big) > \frac{|S|}{t} \,, \]
   and the edge $e$ is a bridge in $G-S$.
 \end{itemize}
 In the first case, we define $S = S(e) = \emptyset$.
\end{proposition}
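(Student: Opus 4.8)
The plan is to split on whether $e$ is a bridge in $G$. If it is, there is nothing to prove: we are in the first alternative and set $S(e)=\emptyset$. So assume from now on that $e$ is not a bridge. Since $G$ is minimally $t$-tough with $t$ a positive rational, $G$ is connected and noncomplete (a disconnected graph has toughness $0$ and a complete graph has toughness $\infty$); as $e$ is not a bridge, $G-e$ is connected as well.

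I would next extract a witness from the strict drop in toughness. Minimality gives $\tau(G-e)<t$, so there is a vertex set $S$ disconnecting $G-e$ with $\omega((G-e)-S)>|S|/t$; for concreteness one may take $S$ attaining $\tau(G-e)$, so that $\omega((G-e)-S)=|S|/\tau(G-e)>|S|/t$. Note $\omega((G-e)-S)\ge 2$ forces $S\neq\emptyset$ and $S\neq V(G)$, hence $|S|\ge 1$ and $\omega(G-S)\ge 1$. This already delivers the second displayed inequality. It then remains to establish the first inequality $\omega(G-S)\le|S|/t$, after which the bridge condition comes for free: combining the two inequalities yields $\omega((G-e)-S)>\omega(G-S)$, so deleting $e$ strictly increases the number of components of $G-S$, which is exactly the statement that $e$ is a bridge in $G-S$ (and in particular that both endpoints of $e$ lie outside $S$).

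To prove $\omega(G-S)\le|S|/t$ I would distinguish two cases. If $S$ already disconnects $G$, i.e.\ $\omega(G-S)\ge 2$, then $t$-toughness of $G$ gives $|S|\ge t\,\omega(G-S)$ and we are done immediately. The delicate case, and the one I expect to be the main obstacle, is when $G-S$ is connected, so that $\omega(G-S)=1$ and the desired inequality reduces to the single lower bound $|S|\ge t$. Here $e=uv$ is a bridge of the connected graph $G-S$, and $(G-S)-e$ splits into exactly two parts $A\ni u$ and $B\ni v$.

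The point needing the most care is therefore bounding $|S|$ from below in this connected case, and I would do so using the standard consequence of $t$-toughness that a noncomplete $t$-tough graph satisfies $\kappa(G),\delta(G)\ge 2t$. Concretely, if $A\neq\{u\}$ then $S\cup\{u\}$ separates $A-u$ from $B$ in $G$ (the only $A$–$B$ edge in $G-S$ was $e$), so it is a cutset with at least two components and $t$-toughness yields $|S|+1\ge 2t$; the case $B\neq\{v\}$ is symmetric, and if $A=\{u\}$ and $B=\{v\}$ then all neighbors of $u$ lie in $S\cup\{v\}$, whence $\delta(G)\ge 2t$ again gives $|S|+1\ge 2t$. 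Thus $|S|\ge 2t-1$ in every subcase. Together with the trivial bound $|S|\ge 1$ this yields $|S|\ge\max\{1,\,2t-1\}\ge t$, using $1\ge t$ when $t\le 1$ and $2t-1\ge t$ when $t\ge 1$. This closes the connected case, gives $\omega(G-S)=1\le|S|/t$, and hence completes the proof.
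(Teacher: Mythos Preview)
The paper does not actually prove this proposition; it is stated with a citation to~\cite{article:dp} and described merely as ``an easy consequence of the definition of minimally tough graphs.'' So there is no proof in the paper to compare against, and your argument stands or falls on its own.

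Your proof is correct. The reduction to two cases is the natural one, and you rightly identify that the only nontrivial point is the bound $|S|\ge t$ when $G-S$ is connected. Your case split on $A\ne\{u\}$, $B\ne\{v\}$, or both singletons works, and the final step $\max\{1,2t-1\}\ge t$ is clean. One minor streamlining: instead of splitting into subcases, you could argue directly that since $S$ is a cutset of the connected noncomplete graph $G-e$, one has $|S|\ge\kappa(G-e)\ge\kappa(G)-1\ge 2t-1$, and then combine with $|S|\ge 1$ exactly as you do. This is the same bound obtained more uniformly, and is presumably what the authors had in mind by ``easy consequence.'' But your version is equally valid.
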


The following observation is straightforward.

\begin{observation}\label{bridge}
 Let $G$ be a graph, let $S\subseteq V(G)$, and let $e = uv$ be an edge of $G-S$ that is a bridge in $G-S$ (or, equivalently, $S$ is a $u$-$v$ separator in $G$).
 Then $\omega \big( (G-e)-S \big)  = \omega(G-S) + 1$.
\end{observation}

The following relation between the toughness of a graph and its connectivity can be proved directly from the definition of toughness.

\begin{proposition}[Chv\'{a}tal~\cite{article:toughness_introduction}] \label{claim:connectivity_and_toughness}
 For every noncomplete graph $G$, we have $\tau(G) \le \kappa(G)/2$.
\end{proposition}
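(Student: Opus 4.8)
The plan is to exploit the variational characterization of toughness that is implicit in the definition: for a connected noncomplete graph, $\tau(G)$ equals the minimum of $|S|/\omega(G-S)$ taken over all vertex sets $S$ that disconnect $G$. In particular, \emph{any} single disconnecting set $S$ furnishes the upper bound $\tau(G) \le |S|/\omega(G-S)$. The strategy is therefore to select one especially economical disconnecting set, namely a minimum vertex cut, and read off the inequality.

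First I would dispose of the degenerate case. If $G$ is disconnected, then $\kappa(G)=0$ and also $\tau(G)=0$ (as noted after the definition, a graph is disconnected precisely when its toughness is $0$), so the inequality $\tau(G)\le \kappa(G)/2$ holds with equality. Hence I may assume $G$ is connected. Since $G$ is also noncomplete, it admits a vertex cut, so $\kappa(G)$ is a well-defined positive integer and $\tau(G)$ is finite (the value $\infty$ is reserved for complete graphs).

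Next I would fix a minimum vertex cut $S\subseteq V(G)$ with $|S|=\kappa(G)$. By the definition of connectivity, $G-S$ is disconnected, so $\omega(G-S)\ge 2$ and $S$ is an admissible set in the definition of toughness. Applying that definition to this particular $S$ gives $|S|\ge \tau(G)\cdot\omega(G-S)$, and dividing by $\omega(G-S)$ together with $\omega(G-S)\ge 2$ yields
\[
\tau(G)\;\le\;\frac{|S|}{\omega(G-S)}\;\le\;\frac{\kappa(G)}{2},
\]
as claimed.

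There is essentially no hard step here; the proof is a one-line consequence of the two definitions once the right cut is chosen. The only point requiring a moment of care is the bookkeeping around the excluded cases: one must invoke noncompleteness to guarantee that a vertex cut exists (so that $\kappa(G)$ makes sense and $\tau(G)\neq\infty$), and one must separately verify the disconnected case where both sides vanish. Beyond that, the argument is immediate.
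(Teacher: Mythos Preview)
Your proof is correct and matches the paper's approach: the paper does not spell out a proof but states that the inequality ``can be proved directly from the definition of toughness,'' which is exactly what you do by plugging a minimum vertex cut into the defining inequality. Your handling of the disconnected and noncomplete edge cases is also fine.
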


Now we give some further definitions and theorems needed for this paper. Let $G$ be a graph. For two vertices $u,v \in V(G)$, a set $S \subseteq V(G) \setminus \{ u,v \}$ is called a \emph{$u$-$v$ separator} if $u$ and $v$ belong to different components of $G-S$. A $u$-$v$ separator is called \emph{minimal} if none of its proper subsets is a $u$-$v$ separator. 
A set $S \subseteq V(G)$ is called a \emph{(minimal) separator} if it is a (minimal) $u$-$v$ separator for some $u,v \in V(G)$. 
Observe that the empty set is a minimal separator if and only if the graph is disconnected.
Note that a minimal separator can be a proper subset of another minimal separator but for different pairs of vertices.

To give a characterization for minimal separators, we need the following definition.

\begin{definition}
 For a set $S \subseteq V(G)$, a component $C$ of $G-S$ is said to be \emph{$S$-full} if every vertex in $S$ has a neighbor in $C$.
\end{definition}

The concept of $S$-full components can be used to characterize minimal separators.

\begin{proposition}[Golumbic, \cite{book:minseparator_characterization}*{Exercise~10 of Chapter~4}] \label{claim:minseparator_characterization}
 In a graph $G$, a set $S \subseteq V(G)$ is a minimal separator if and only if the graph $G-S$ has at least two $S$-full components.
\end{proposition}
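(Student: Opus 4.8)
The plan is to prove both directions directly from the definitions, using repeatedly the elementary fact that if $C$ is a component of $G-S$, then every neighbor of $C$ lies in $S$; equivalently $N_G(C)\subseteq S$.

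For the forward direction, suppose $S$ is a minimal $u$-$v$ separator, and let $C_u$ and $C_v$ be the two distinct components of $G-S$ containing $u$ and $v$, respectively. I would show each of them is $S$-full. Suppose not, say some $s\in S$ has no neighbor in $C_u$. Then $N_G(C_u)\subseteq S\setminus\{s\}$, so in $G-(S\setminus\{s\})$ no edge leaves $C_u$; hence $C_u$ is still an entire component there and in particular does not contain $v$. This makes $S\setminus\{s\}$ a $u$-$v$ separator, contradicting minimality. Thus every vertex of $S$ has a neighbor in $C_u$, so $C_u$ is $S$-full, and by symmetry so is $C_v$; therefore $G-S$ has at least two $S$-full components.

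For the reverse direction, suppose $G-S$ has two $S$-full components $C_1$ and $C_2$, and pick $u\in C_1$ and $v\in C_2$. Since $u$ and $v$ lie in different components of $G-S$, the set $S$ is a $u$-$v$ separator. To check minimality I would argue that deleting any single vertex from $S$ reconnects $u$ and $v$; this suffices, since any proper subset of $S$ sits inside some $S\setminus\{s\}$, and enlarging a $u$-$v$ separator (without reabsorbing $u$ or $v$) keeps it one. Fixing $s\in S$, $S$-fullness gives neighbors $a\in C_1$ and $b\in C_2$ of $s$; concatenating a $u$-$a$ path inside $C_1$, the edges $as$ and $sb$, and a $b$-$v$ path inside $C_2$ produces a $u$-$v$ walk avoiding $S\setminus\{s\}$. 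Hence $S\setminus\{s\}$ is not a $u$-$v$ separator and $S$ is a minimal $u$-$v$ separator, i.e.\ a minimal separator.

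The argument is short, and the only delicate point is the reduction, in the reverse direction, from ``no single deleted vertex works'' to genuine minimality over all proper subsets, together with keeping careful track that the constructed path (and, in the forward direction, the surviving component) avoids exactly $S\setminus\{s\}$. The degenerate case $S=\varnothing$, where $G$ is disconnected and every component is vacuously $S$-full, is consistent with both implications.
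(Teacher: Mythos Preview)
The paper does not supply its own proof of this proposition; it is quoted as a known fact (Exercise~10 of Chapter~4 in Golumbic's book) and used as a black box. So there is nothing in the paper to compare your argument against.

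That said, your proof is correct and is the standard one. Both directions are handled cleanly: in the forward direction the contrapositive step (if $s$ has no neighbor in $C_u$ then $S\setminus\{s\}$ still separates $u$ from $v$) is exactly right, and in the reverse direction the reduction from ``no $S\setminus\{s\}$ separates $u$ from $v$'' to full minimality via the monotonicity of separators is valid, since any proper subset $S'\subsetneq S$ is contained in some $S\setminus\{s\}$ with $u,v\notin S\setminus\{s\}$. The treatment of the degenerate case $S=\varnothing$ is also fine.
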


In this work, we mainly focus on chordal, strongly chordal, and split graphs. A \emph{hole} in a graph is an induced cycle of length at least 4. A graph is \emph{chordal} if it is hole-free, i.e., if it does not contain an induced cycle of length at least 4. A graph is \emph{strongly chordal} if it is chordal and every even cycle of length at least 6 has an odd chord, i.e., a chord whose endpoints are an odd distance apart in the cycle. A graph is an interval graph if each of its vertices can be assigned an interval on the real line so that two vertices are adjacent if and only if the corresponding intervals overlap. A graph is a \emph{split graph} if its vertex set can be partitioned into a clique and an independent set. It is not difficult to see that every split graph is chordal, but not necessarily strongly chordal.

A vertex or a set of vertices is called \emph{simplicial} if its closed neighborhood is a clique. 
A classical theorem of Dirac states that every chordal graph has a simplicial vertex; moreover, every noncomplete, chordal graph contains at least two nonadjacent simplicial vertices \cite{article:rigid_circuit}.

\begin{definition}\label{def:maximumNeighbor}
 A vertex $u \in N[v]$ is a \emph{maximum neighbor} of a vertex $v$ if $N[w] \subseteq N[u]$ holds for all $w \in N[v]$.
\end{definition}

Note that a vertex can be its own maximum neighbor.

\begin{definition}
 A vertex $s$ is called a \emph{simple vertex} if its closed neighborhood can be linearly ordered by inclusion, i.e., for any $x,y \in N[s]$, if $x$ precedes $y$, then $N[x] \subseteq N[y]$.
\end{definition}

Observe that every simple vertex is simplicial, and has a maximum neighbor.

We now introduce the notions of moplexes and moplicial vertices, which are crucial for our approach.

\begin{definition}
 In a graph $G$, a set of vertices $M \subseteq V(G)$ is a \emph{module} if each vertex $v \in V(G) \setminus M$ is either adjacent to every vertex in $M$ or not adjacent to any vertex in $M$.
 A \emph{clique module} is a module that is a clique.
 A \emph{moplex} in a graph $G$ is an inclusion-wise maximal clique module whose open neighborhood is either empty or a minimal separator in $G$.
 A vertex belonging to a moplex is said to be \emph{moplicial}.
\end{definition}

Let us first recall the following characterization of chordal graphs.

\begin{theorem}[Dirac~\cite{article:rigid_circuit}] \label{thm:in_chordal_graphs_moplicial_means_simplicial}
 A graph $G$ is chordal if and only if every minimal separator in $G$ forms a clique.
\end{theorem}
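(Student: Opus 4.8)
The statement is an ``if and only if,'' so the plan is to treat the two directions separately, arguing each by contraposition.

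For the forward direction I would assume $G$ is chordal, let $S$ be a minimal separator, and show $S$ is a clique. The main tool is \cref{claim:minseparator_characterization}: since $S$ is a minimal separator, $G-S$ has at least two $S$-full components $C_1$ and $C_2$. Suppose toward a contradiction that $S$ is not a clique, so there exist nonadjacent $x,y\in S$. Because $C_1$ is $S$-full and connected, both $x$ and $y$ have neighbors in $C_1$, so there is an $x$-$y$ path whose interior lies in $C_1$; choosing a shortest such path $P$ makes it induced, and it has length at least $2$ since $xy\notin E(G)$. Symmetrically I would pick a shortest $x$-$y$ path $Q$ with interior in $C_2$. Concatenating $P$ and $Q$ yields a cycle through $x$ and $y$ of length at least $4$.

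The step I expect to be the crux is verifying that this concatenated cycle is chordless, since that is exactly what will contradict chordality. I would check the candidate chords by type: there are none within $P$ or within $Q$ because each was chosen as a shortest (hence induced) path; there are none between an interior vertex of $P$ and an interior vertex of $Q$ because such vertices lie in the \emph{distinct} components $C_1$ and $C_2$ of $G-S$ and are therefore nonadjacent; and $x,y$ are nonadjacent by assumption. Hence the cycle is an induced cycle of length at least $4$, contradicting that $G$ is chordal, so every minimal separator $S$ must be a clique.

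For the backward direction I would prove the contrapositive: if $G$ is not chordal, it contains an induced cycle $C$ of length at least $4$, and I would exhibit a minimal separator that is not a clique. Pick two vertices $u,v$ of $C$ that are nonconsecutive on $C$, hence nonadjacent in $G$ since $C$ is induced; deleting $u$ and $v$ splits $C$ into two internally disjoint arcs. Let $S$ be any minimal $u$-$v$ separator. Each arc is a $u$-$v$ path avoiding the other arc, so $S$ must meet each arc, and since $S$ contains neither $u$ nor $v$, it must contain an interior vertex $a$ of one arc and an interior vertex $b$ of the other. Interior vertices of different arcs of an induced cycle are nonconsecutive on $C$ and therefore nonadjacent, so $a,b\in S$ witness that the minimal separator $S$ is not a clique. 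This establishes the contrapositive and completes the proof.
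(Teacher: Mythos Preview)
Your proof is correct. Note, however, that the paper does not actually prove this statement: \cref{thm:in_chordal_graphs_moplicial_means_simplicial} is quoted as a classical result of Dirac and is merely cited, so there is no proof in the paper to compare against. What you have written is essentially the standard textbook proof of Dirac's characterization, and all the steps go through as you describe. In the forward direction, the use of \cref{claim:minseparator_characterization} to obtain two $S$-full components and then glue two shortest paths into an induced cycle is exactly the classical argument; your case check for chordlessness is complete. In the backward direction, your choice of two nonconsecutive cycle vertices and the observation that any minimal $u$-$v$ separator must hit the interiors of both arcs is also the standard route, and the fact that $C$ is induced is precisely what guarantees the two hit vertices are nonadjacent.
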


The following result is an immediate consequence of \cref{thm:in_chordal_graphs_moplicial_means_simplicial}.

\begin{corollary}\label{cor:moplicial-is-simplicial}
 If $G$ is a chordal graph, then every moplicial vertex in $G$ is simplicial.
\end{corollary}

\Cref{thm:in_chordal_graphs_moplicial_means_simplicial} was strengthened by Berry and Bordat~\cite{article:moplexes_in_graphs} using the concept of moplexes, which, in the case of chordal graphs, can consist of simplicial vertices only.

\begin{theorem}[Berry and Bordat~\cite{article:moplexes_in_graphs}] \label{thm:moplexes_in_graphs}
 Every noncomplete graph contains at least two moplexes. 
\end{theorem}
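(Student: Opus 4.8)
The plan is to argue by induction on $|V(G)|$, reducing the whole statement to one Key Lemma: \emph{if $G$ is connected, $S$ is a minimal separator of $G$, and $C$ is an $S$-full component of $G-S$, then $G$ has a moplex $M$ with $M\subseteq C$.} Before proving this I would clear away the degenerate cases. If $G$ is disconnected, then any complete component is itself an inclusion-maximal clique module with empty open neighborhood, hence a moplex, and any noncomplete component is a strictly smaller graph to which induction applies; since the open neighborhood of a moplex of a component stays empty or a minimal separator when viewed in $G$ (a minimal separator of a component is again a minimal separator of $G$), each of the at least two components supplies a moplex of $G$, and moplexes coming from different components are disjoint, hence distinct. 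Thus I may assume $G$ is connected and noncomplete.

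Granting the Key Lemma, the theorem is immediate. A connected noncomplete graph has two non-adjacent vertices, so it has a nonempty minimal separator $S$, and by \cref{claim:minseparator_characterization} the graph $G-S$ has at least two $S$-full components $C_1$ and $C_2$. Applying the Key Lemma to $C_1$ and to $C_2$ yields moplexes $M_1\subseteq C_1$ and $M_2\subseteq C_2$; since $C_1$ and $C_2$ are distinct components of $G-S$ they are disjoint, so $M_1\neq M_2$ and we obtain the two required moplexes.

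To prove the Key Lemma I would pass to the auxiliary graph $H^+$ obtained from $G[S\cup C]$ by completing $S$ into a clique; this is connected (because $C$ is connected and every vertex of $S$ has a neighbor in $C$) and strictly smaller than $G$ (the other full component is not in $S\cup C$). If $H^+$ is complete, then in $G$ the set $C$ is a clique completely joined to $S$, so every vertex of $C$ has closed neighborhood exactly $S\cup C$ and $C$ itself is a clique module with $N_G(C)=S$; it is inclusion-maximal because any $s\in S$ has a neighbor in a \emph{second} $S$-full component (which exists by \cref{claim:minseparator_characterization}), and that neighbor is non-adjacent to $C$, so $C\cup\{s\}$ fails to be a module. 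Hence $M=C$ is the desired moplex. If $H^+$ is not complete, it has a minimal separator $S^\ast$, and a short case analysis---according to whether $S\subseteq S^\ast$ or not, using that the clique $S\setminus S^\ast$ lies in a single component of $H^+-S^\ast$---shows that some $S^\ast$-full component $C^\ast$ is contained in $C$. Applying the induction hypothesis to $(H^+,S^\ast,C^\ast)$ then produces a moplex $M\subseteq C^\ast\subseteq C$ of $H^+$, which I would finally transfer back to $G$.

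The main obstacle is precisely this transfer step. Because $M\subseteq C$ is disjoint from $S$ and the only edges added in forming $H^+$ lie inside $S$, the fact that $M$ is still a clique module of $G$ is routine. The two delicate points are that $M$ remains \emph{inclusion-maximal} in $G$---any larger clique module of $G$ must be contained in $S\cup C=V(H^+)$ and would already have contradicted the maximality of $M$ in $H^+$---and that $N_G(M)=N_{H^+}(M)$ is a minimal separator of $G$ and not merely of $H^+$; this last point is where one must again invoke \cref{claim:minseparator_characterization} together with the fullness of $C$ to exhibit two $N_G(M)$-full components in $G-N_G(M)$. A cleaner but less self-contained alternative would be to use the characterization, also due to Berry and Bordat, that the last vertex visited by a lexicographic breadth-first search is always moplicial, and to extract a second moplex by a multi-sweep argument; I prefer the separator-based induction here because it relies only on the tools already set up in \cref{preliminaries}.
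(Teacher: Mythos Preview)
The paper does not prove \cref{thm:moplexes_in_graphs}; it is quoted as a result of Berry and Bordat~\cite{article:moplexes_in_graphs} and used as a black box throughout. There is therefore no proof in the paper to compare your proposal against.

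For what it is worth, your separator-based induction is a sound route to the statement and is \emph{not} the argument in~\cite{article:moplexes_in_graphs}: Berry and Bordat prove it via lexicographic breadth-first search, showing that the last vertex of any LexBFS ordering is moplicial and then extracting a second moplex, exactly the alternative you mention at the end. Your approach trades the algorithmic machinery of LexBFS for a purely structural induction using only \cref{claim:minseparator_characterization}; the price is the transfer step you correctly flag as the main obstacle. That step does go through. With $M\subseteq C$ one has $N_G(M)=N_{H^+}(M)$, and if $D$ is the second $N_{H^+}(M)$-full component of $H^+-N_{H^+}(M)$, then either $D\subseteq C$ (and $D$ is already the required second full component in $G-N_G(M)$), or $D$ meets $S$, in which case $D$ together with any other $S$-full component $C'$ of $G-S$ lies in a single component of $G-N_G(M)$: each vertex of $N_G(M)\cap S$ sees $C'$, and each vertex of $N_G(M)\cap C$ sees $D$ in $G$ because edges from $C$ into $S\cup C$ are unchanged when passing from $H^+$ to $G$. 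Filling this in would make the sketch a complete proof.
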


The following proposition provides a link between simple vertices and moplexes.

\begin{proposition} \label{lemma:simple_means_moplicial}
 If $s$ is a simple vertex of a graph $G$, then $s$ is moplicial.
\end{proposition}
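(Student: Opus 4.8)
The plan is to exhibit an explicit moplex containing $s$. Since $s$ is simple it is simplicial, so $N[s]$ is a clique; consequently $N[s]\subseteq N[w]$ for every $w\in N[s]$, so $N[s]$ is the smallest closed neighborhood occurring in the linear order on $N[s]$. I set
\[ M=\{\,v\in V(G): N[v]=N[s]\,\}, \]
which contains $s$ and forms an initial segment of this order. First I would check that $M$ is a clique module with $N(M)=N[s]\setminus M$: vertices of $N[s]\setminus M$ are adjacent to all of $M$ because $N[s]$ is a clique, while vertices outside $N[s]$ are adjacent to none of $M$ because each $w\in M$ satisfies $N[w]=N[s]$. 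The same dichotomy shows that $M$ is inclusion-maximal among clique modules: any clique module containing $s$ lies inside the clique $N[s]$, and a vertex $w\in N[s]\setminus M$ has some neighbor $z\notin N[s]$ (as $N[w]\supsetneq N[s]$), which would violate the module condition for the pair $s,w$.

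It then remains to verify the defining separator property, namely that $N(M)$ is empty or a minimal separator; this is the heart of the argument. If $N(M)=\emptyset$ we are done, so assume $N(M)=N[s]\setminus M\neq\emptyset$. I would invoke \cref{claim:minseparator_characterization} and produce two $N(M)$-full components of $G-N(M)$. The first is $M$ itself: it is connected (being a clique) and, once $N(M)$ is deleted, is separated from the rest of the graph, while every vertex of $N(M)\subseteq N[s]$ is adjacent to all of $M$; hence $M$ is $N(M)$-full.

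The main obstacle is the second full component, and this is exactly where simplicity (rather than mere simpliciality) is used. Let $x$ be the inclusion-minimal element of $N(M)$, so that $N[s]\subsetneq N[x]\subseteq N[y]$ for every $y\in N(M)$. Choosing any $z\in N[x]\setminus N[s]$, the vertex $z$ lies outside $N[s]$, hence outside $M\cup N(M)$ and in some component $C\neq M$ of $G-N(M)$; moreover, for every $y\in N(M)$ we have $z\in N[x]\subseteq N[y]$ with $z\neq y$, so $z$ is adjacent to \emph{every} vertex of $N(M)$. Thus each vertex of $N(M)$ has the neighbor $z\in C$, so $C$ is a second $N(M)$-full component, and by \cref{claim:minseparator_characterization} the set $N(M)$ is a minimal separator. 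Therefore $M$ is a moplex and $s$ is moplicial. The one delicate point is selecting $z$ from the neighborhood of the inclusion-minimal element of $N(M)$: this is what forces $z$ to see all of $N(M)$ simultaneously, whereas a vertex drawn from the neighborhood of the maximum neighbor of $s$ need not be adjacent to the smaller members of $N(M)$.
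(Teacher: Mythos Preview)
Your proof is correct and follows essentially the same argument as the paper: define $M=\{v:N[v]=N[s]\}$, observe it is an inclusion-maximal clique module, and when $N(M)\neq\emptyset$ exhibit two $N(M)$-full components by taking $M$ itself and the component of a vertex $z\in N[x]\setminus N[s]$, where $x$ is the inclusion-minimal element of $N(M)$ (the paper phrases this as ``smallest degree'', which is equivalent under the linear order). You supply a bit more detail on the maximality of $M$, but the structure and the key idea are identical.
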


\begin{proof}
 Let $M$ denote the set of vertices in $G$ whose closed neighborhood is the same as that of $s$. 
 Clearly, $M$ is an inclusion-wise maximal clique module. 
 If $N(M) = \emptyset$, then $M$ is a moplex and thus, $s$ is moplical.
 So we may assume that $N(M)\neq \emptyset$.
 In this case, we claim that $N(M)$ is a minimal separator in $G$.
 
 By \cref{claim:minseparator_characterization}, we need to show that $G-N(M)$ has at least two $N(M)$-full components.
 
 Since $M$ is a clique module, 
 $M$ forms an $N(M)$-full component in $G-N(M)$. 
 
 Let $v$ be a vertex in $N(M)$ with smallest degree (since $N(M) \ne \emptyset$, such a vertex exists). Since $s$ is simple and $v \notin M$, it follows that $N[s] \subsetneq N[v] \subseteq N[w]$ holds for any $w \in N(M)$. Take a vertex $u \in N[v] \setminus N[s]$, and let $H$ denote the connected component of $G-N(M)$ that contains $u$.
 Now we show that $H$ is an $N(M)$-full component in $G-N(M)$.
 Since $u \in N[v] \subseteq N[w]$ holds for any $w \in N(M)$, we can conclude that $H$ is also an $N(M)$-full component in $G-N(M)$. Therefore, $M$ is a moplex, thus $s$ is moplicial.
\end{proof}

Chordal graphs have several equivalent definitions; in this paper, we also rely on the one using the concept of clique trees.

\begin{definition} \label{def:clique_tree}
 Let $G$ be a graph and let $\mathcal{Q}_G$ denote the set of its maximal cliques. A \emph{clique tree} of $G$ is a tree with vertex set $\mathcal{Q}_G$ such that for every pair of distinct maximal cliques $Q, Q' \in \mathcal{Q}_G$, the set $Q \cap Q'$ is contained in every clique on the path connecting $Q$ and $Q'$ in the tree.
\end{definition}

The concept of clique trees was used independently by Gavril~\cite{article:intersection_graphs_of_subtrees} and by Blair and Payton~\cite{article:clique_trees} to characterize chordal graphs.

\begin{theorem}[Gavril~\cite{article:intersection_graphs_of_subtrees}, Blair and Payton~\cite{article:clique_trees}] \label{thm:clique_tree}
A connected graph is chordal if and only if it has a clique tree.
\end{theorem}

With the help of clique trees, minimal separators in chordal graphs can be characterized as follows.

\begin{proposition}[Blair and Payton~\cite{article:clique_trees}] \label{lemma:minseparators_in_chordal_graphs}
 Let $G$ be a connected, chordal graph and $T$ be a clique tree of $G$. A set of vertices $S \subseteq V(G)$ is a minimal separator in $G$ if and only if $S$ is the intersection of two maximal cliques corresponding to two adjacent vertices of $T$.
\end{proposition}

The following proposition shows an interesting connection between moplexes in a chordal graph and its clique trees.

\begin{proposition}[Berry and Bordat~\cite{article:moplexes_in_clique_trees}] \label{lemma:moplexes_in_clique_trees}
 For any moplex $M$ in a connected, noncomplete, chordal graph $G$, there exists a clique tree $T$ of $G$ such that $N[M]$ is a maximal clique corresponding to a leaf of $T$.
\end{proposition}

Building upon the notion of maximum neighbor (cf.\ \Cref{def:maximumNeighbor}), we introduce the following definition.

\begin{definition}\label{def:maximumNeighboringEdge}
 An edge $uu'$ is a \emph{maximum neighboring edge} of a vertex $v$ if $u, u' \in N(v)$ and $N[w] \subseteq N[u] \cup N[u']$ holds for all $w \in N[v]$.
\end{definition}

One particular way of obtaining a maximum neighboring edge is via maximum neighbors, as follows.

\begin{observation}\label{mne}
If a vertex $v$ has a maximum neighbor $u \ne v$, then for every neighbor $u' \ne u$ of $v$, the edge $uu'$ is a maximum neighboring edge of $v$.
\end{observation}

In \cref{sec:strongly_chordal}, we study strongly chordal graphs, where we rely on an equivalent characterization of these graphs. To present this characterization, first we need a definition.

\begin{definition}
 Given an integer $k \ge 3$, a \emph{$k$-sun} $S_k$ is a graph whose vertex set can be partitioned into two sets $A = \{ a_1, \ldots, a_k \}$ and $B = \{ b_1, \ldots, b_k \}$ such that $A$ is a clique and $B$ is an independent set in $S_k$, and for all $i,j \in \{ 1, \ldots, k \}$, the vertices $a_i$ and $b_j$ are adjacent if and only if $i=j$ or $i \equiv j+1 \pmod{k}$.
 A graph is said to be \emph{sun-free} if it does not contain an induced $k$-sun for any integer $k \ge 3$.
 \end{definition}

For an example of a $k$-sun, see \Cref{fig:k-sun}.
 
\begin{figure}[ht]
\centering
\begin{tikzpicture}[scale=0.9]
 \tikzstyle{vertex}=[draw,circle,fill=black,minimum size=6,inner sep=0]
 
 \node[vertex] (a1) at (90:1) {};
 \node[vertex] (a2) at (90+120:1) {};
 \node[vertex] (a3) at (90+2*120:1) {};
 
 \node[vertex] (b1) at (150:2) {};
 \node[vertex] (b2) at (150+120:2) {};
 \node[vertex] (b3) at (150+2*120:2) {};
 
 \draw[thick] (a1) -- (a2);
 \draw[thick] (a2) -- (a3);
 \draw[thick] (a3) -- (a1);
 
 \draw (a1) -- (b1) -- (a2) -- (b2) -- (a3) -- (b3) -- (a1);
 
 \node at (-1.75,1.5) {$S_3$};
 
 \begin{scope}[shift={(5,0)}]
 \node[vertex] (a1) at (45+0*90:1) {};
 \node[vertex] (a2) at (45+1*90:1) {};
 \node[vertex] (a3) at (45+2*90:1) {};
 \node[vertex] (a4) at (45+3*90:1) {};
 
 \node[vertex] (b1) at (90+0*90:{(sqrt(3)+1)/sqrt(2)}) {};
 \node[vertex] (b2) at (90+1*90:{(sqrt(3)+1)/sqrt(2)}) {};
 \node[vertex] (b3) at (90+2*90:{(sqrt(3)+1)/sqrt(2)}) {};
 \node[vertex] (b4) at (90+3*90:{(sqrt(3)+1)/sqrt(2)}) {};
 
 \draw (a1) -- (a2) -- (a3) -- (a4) -- (a1);
 \draw (a1) -- (a3);
 \draw (a2) -- (a4);
 
 \draw (a1) -- (b1) -- (a2) -- (b2) -- (a3) -- (b3) -- (a4) -- (b4) -- (a1);
 
 \node at (-1.75,1.5) {$S_4$};
 \end{scope}
 
 \begin{scope}[shift={(10,0)}]
 \node[vertex] (a1) at (90+0*72:1) {};
 \node[vertex] (a2) at (90+1*72:1) {};
 \node[vertex] (a3) at (90+2*72:1) {};
 \node[vertex] (a4) at (90+3*72:1) {};
 \node[vertex] (a5) at (90+4*72:1) {};
 
 \node[vertex] (b1) at (126+0*72:{cos(36) + sqrt(3)*sin(36)}) {};
 \node[vertex] (b2) at (126+1*72:{cos(36) + sqrt(3)*sin(36)}) {};
 \node[vertex] (b3) at (126+2*72:{cos(36) + sqrt(3)*sin(36)}) {};
 \node[vertex] (b4) at (126+3*72:{cos(36) + sqrt(3)*sin(36)}) {};
 \node[vertex] (b5) at (126+4*72:{cos(36) + sqrt(3)*sin(36)}) {};
 
 \draw (a1) -- (a2) -- (a3) -- (a4) -- (a5) -- (a1);
 \draw (a1) -- (a3) -- (a5) -- (a2) -- (a4) -- (a1);
 
 \draw (a1) -- (b1) -- (a2) -- (b2) -- (a3) -- (b3) -- (a4) -- (b4) -- (a5) -- (b5) -- (a1);
 
 \node at (-1.75,1.5) {$S_5$};
 \end{scope}
\end{tikzpicture}
\caption{The 3-sun, the 4-sun, and the 5-sun.}
\label{fig:k-sun}
\end{figure}

Now we are ready to present the equivalent characterizations of strongly chordal graphs.

\begin{theorem}[Farber~\cite{article:strongly_chordal_characterization}] \label{thm:strongly_chordal_characterization}
 For any graph $G$, the following conditions are equivalent.
 \begin{itemize}[topsep=3pt, itemsep=0pt]
  \item[--] The graph $G$ is strongly chordal.
  \item[--] The graph $G$ is chordal and sun-free.
  \item[--] Each induced subgraph of $G$ has a simple vertex.
 \end{itemize}
\end{theorem}

In \cref{sec:interval}, we study interval graphs and make use of the following equivalent characterization. Three independent vertices in a graph form an \emph{asteroidal triple} if every two of them are connected by a path avoiding the neighborhood of the third. A graph is called \emph{asteroidal-triple-free} if it does not contain an asteroidal triple.

\begin{theorem}[Lekkerkerker and Boland~\cite{article:interval_graphs}] \label{thm:interval}
 For any graph $G$, the following conditions are equivalent.
 \begin{itemize}[topsep=3pt, itemsep=0pt]
  \item[--] The graph $G$ is an interval graph.
  \item[--] The graph $G$ is chordal and asteroidal-triple-free.
  \item[--] The graph $G$ does not contain an induced subgraph isomorphic to one of the graphs in \Cref{fig:interval}.
 \end{itemize}
\end{theorem}

For the forbidden induced subgraphs in an interval graph, see \Cref{fig:interval}.

\begin{figure}[ht]
\centering
\begin{tikzpicture}
 \tikzstyle{vertex}=[draw,circle,fill=black,minimum size=6,inner sep=0]
 
 \node[vertex] (u1) at (3,3) [label=right:$u_1$] {};
 \node[vertex] (u2) at (0,3) [label=left:$u_2$] {};
 \node[vertex] (u3) at (0,0) [label=left:$u_3$] {};
 \node[vertex] (u4) at (3,0) [label=right:$u_k$] {};
 
 \draw[fill] (1.35,0) circle (0.015);
 \draw[fill] (1.5,0) circle (0.015);
 \draw[fill] (1.65,0) circle (0.015);
 
 \draw[thick] (u4) -- (u1) -- (u2) -- (u3);
 \draw[thick] (u3) -- (1,0);
 \draw[thick] (u4) -- (2,0);
 
 \begin{scope}[shift={(7,1)}]
 \node[vertex] (a1) at (0,0) {};
 \node[vertex] (a2) at (90:1) {};
 \node[vertex] (a3) at (90:2) {};
 \node[vertex] (a4) at (90+120:1) {};
 \node[vertex] (a5) at (90+120:2) {};
 \node[vertex] (a6) at (90+240:1) {};
 \node[vertex] (a7) at (90+240:2) {};
 
 \draw[thick] (a1) -- (a2) -- (a3);
 \draw[thick] (a1) -- (a4) -- (a5);
 \draw[thick] (a1) -- (a6) -- (a7);
 \end{scope}
 
 \begin{scope}[shift={(11,0)}]
 \node[vertex] (b1) at (1.5,3) {};
 \node[vertex] (b2) at (-0.5,1.5) {};
 \node[vertex] (b3) at (0.5,1.5) {};
 \node[vertex] (b4) at (1.5,1.5) {};
 \node[vertex] (b5) at (2.5,1.5) {};
 \node[vertex] (b6) at (3.5,1.5) {};
 \node[vertex] (b7) at (1.5,0) {};
 
 \draw[thick] (b1) -- (b2);
 \draw[thick] (b1) -- (b3);
 \draw[thick] (b1) -- (b4);
 \draw[thick] (b1) -- (b5);
 \draw[thick] (b1) -- (b6);
 \draw[thick] (b2) -- (b3) -- (b4) -- (b5) -- (b6);
 \draw[thick] (b4) -- (b7);
 \end{scope}

 \begin{scope}[shift={(3,-5)}]
 \node[vertex] (c1) at (0,3) {};
 \node[vertex] (c2) at (0,1.5) {};
 \node[vertex] (c3) at (-3,0) {};
 \node[vertex] (v1) at (-2,0) [label=below:$v_1$] {};
 \node[vertex] (v2) at (-0.75,0) [label=below:$v_2$] {};
 \node[vertex] (v3) at (0.75,0) [label=below:$v_{\ell-1}$] {};
 \node[vertex] (v4) at (2,0) [label=below:$v_{\ell}$] {};
 \node[vertex] (c4) at (3,0) {};
 
 \draw[thick] (c1) -- (c2);
 \draw[thick] (c3) -- (v1) -- (v2) -- (-0.325,0);
 \draw[thick] (0.325,0) -- (v3) -- (v4) -- (c4);
 \draw[thick] (c2) -- (v1);
 \draw[thick] (c2) -- (v2);
 \draw[thick] (c2) -- (v3);
 \draw[thick] (c2) -- (v4);
 
 \draw[fill] (-0.15,0) circle (0.015);
 \draw[fill] (0,0) circle (0.015);
 \draw[fill] (0.15,0) circle (0.015);
 
 \draw[fill] (-0.125,0.75) circle (0.015);
 \draw[fill] (0,0.75) circle (0.015);
 \draw[fill] (0.125,0.75) circle (0.015);
 \end{scope}
 
 \begin{scope}[shift={(11,-5)}]
 \node[vertex] (d1) at (0,3) {};
 \node[vertex] (d2) at (-1,1.5) {};
 \node[vertex] (d3) at (1,1.5) {};
 \node[vertex] (d4) at (-3,0) {};
 \node[vertex] (w1) at (-2,0) [label=below:$w_1$] {};
 \node[vertex] (w2) at (-0.75,0) [label=below:$w_2$] {};
 \node[vertex] (w3) at (0.75,0) [label=below:$w_{m-1}$] {};
 \node[vertex] (w4) at (2,0) [label=below:$w_{m}$] {};
 \node[vertex] (d5) at (3,0) {};
 
 \draw[thick] (d1) -- (d2);
 \draw[thick] (d1) -- (d3);
 \draw[thick] (d4) -- (w1) -- (w2) -- (-0.325,0);
 \draw[thick] (0.325,0) -- (w3) -- (w4) -- (d5);
 \draw[thick] (d2) -- (d3);
 \draw[thick] (d2) -- (d4);
 \draw[thick] (d2) -- (w1);
 \draw[thick] (d2) -- (w2);
 \draw[thick] (d2) -- (w3);
 \draw[thick] (d2) -- (w4);
 \draw[thick] (d3) -- (w1);
 \draw[thick] (d3) -- (w2);
 \draw[thick] (d3) -- (w3);
 \draw[thick] (d3) -- (w4);
 \draw[thick] (d3) -- (d5);
 
 \draw[fill] (-0.15,0) circle (0.015);
 \draw[fill] (0,0) circle (0.015);
 \draw[fill] (0.15,0) circle (0.015);
 
 \draw[fill] (-9/12,1) circle (0.015);
 \draw[fill] (-8/12,1) circle (0.015);
 \draw[fill] (-7/12,1) circle (0.015);
 
 \draw[fill] (7/12,1) circle (0.015);
 \draw[fill] (8/12,1) circle (0.015);
 \draw[fill] (9/12,1) circle (0.015);
 \end{scope}

\end{tikzpicture}
\caption{The forbidden induced subgraphs in an interval graph, where $k \ge 4$ and $\ell \ge 2$ and $m \ge 1$.}
\label{fig:interval}
\end{figure}

In \cref{sec:interval}, we study split graphs using the following characterization.

\begin{theorem}[F\"{o}ldes and Hammer~\cite{article:split_characterization}] \label{thm:split_characterization}
 A graph is a split graph if and only if it is $(C_4, C_5, 2K_2)$-free, i.e., if it does not contain a cycle of length 4, a cycle of length 5, and a pair of independent edges as induced subgraphs.
\end{theorem}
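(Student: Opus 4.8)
To prove \cref{thm:split_characterization} I would treat the two implications separately.

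For the \emph{forward} direction (split $\Rightarrow (C_4,C_5,2K_2)$-free), I would first note that the class of split graphs is hereditary: if $V(G)=K\cup I$ with $K$ a clique and $I$ an independent set, then for any $W\subseteq V(G)$ the sets $W\cap K$ and $W\cap I$ witness that $G[W]$ is split. Hence it suffices to verify that none of $C_4$, $C_5$, $2K_2$ is itself a split graph. Each of these three graphs satisfies $\alpha = \omega = 2$, so any split partition could cover at most $2+2=4$ vertices; this already excludes $C_5$ on five vertices. For $C_4$ and $2K_2$ a split partition would need exactly two vertices in the clique part and two in the independent part, and a one-line inspection rules this out: in $2K_2$ each edge has an endpoint in the clique, forcing those two endpoints (from the two independent edges) to be adjacent, a contradiction; in $C_4$ the two clique vertices form one edge of the cycle while the remaining two vertices form the opposite edge, so they cannot be independent.

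For the \emph{converse} ($(C_4,C_5,2K_2)$-free $\Rightarrow$ split), the plan is an extremal argument on a maximum clique. I would first record a useful byproduct of the hypotheses: every induced cycle of length at least $6$ contains an induced $2K_2$ (two edges whose indices differ by three), while $C_4$ and $C_5$ are excluded outright, so $G$ is in fact chordal. Now let $K$ be a maximum clique of $G$ and put $I=V(G)\setminus K$; the aim is to show $I$ is independent, which yields the split partition $(K,I)$. Suppose not, and let $xy$ be an edge with $x,y\in I$. Since $K$ is maximum, neither $K\cup\{x\}$ nor $K\cup\{y\}$ is a clique, so each of $x,y$ has a non-neighbor in $K$. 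Partition $K$ by adjacency to $\{x,y\}$ into the common neighbors $A$, the private neighbors $B_x$ and $B_y$, and the vertices $Z$ adjacent to neither.

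The heart of the proof is to show that every configuration forces a forbidden induced subgraph. If both $B_x$ and $B_y$ are nonempty, pick $p\in B_x$ and $q\in B_y$; then $x,p,q,y$ induce a $C_4$, since the cycle $x\,p\,q\,y$ has both diagonals $xq$ and $py$ absent. Hence at least one of $B_x,B_y$ is empty, and as $x,y$ each miss a vertex of $K$ this forces $Z\neq\emptyset$; if moreover $|Z|\ge 2$, then two vertices of $Z$ together with $x,y$ induce a $2K_2$. The main obstacle is therefore the remaining \emph{one-sided} case, say $B_y=\emptyset$, $B_x\neq\emptyset$, and $Z=\{z\}$, where the four vertices one naturally inspects induce only a $P_4$ (which is split) rather than a forbidden graph. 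To close it I would refine the choice of $K$: among all maximum cliques take one minimizing the number of edges of $G[V(G)\setminus K]$. Since $A\cup B_x\cup\{x\}$ is then an equally large maximum clique, comparing edge counts forces $z$ to have a neighbor $w\in I$ with $w\neq x,y$; examining $\{x,y,z,w\}$, and if necessary adjoining one vertex of $B_x$, then produces an induced $2K_2$ or $C_4$, the final contradiction. An alternative route I would keep in reserve, exploiting the chordality obtained above, is induction on $|V(G)|$ via a simplicial vertex $v$: by induction $G-v$ has a split partition $(C,I')$ with $C$ a maximum clique of $G-v$, and the simpliciality of $v$ is exactly what lets one reattach $v$ to one side, the sole delicate point being when $v$ has both a non-neighbor in $C$ and a neighbor in $I'$, which the forbidden subgraphs again exclude.
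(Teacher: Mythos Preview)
The paper does not prove \cref{thm:split_characterization}; it merely quotes it as a classical result of F\"oldes and Hammer. So there is no ``paper's own proof'' to compare against, and your proposal is a self-contained argument for a cited fact.

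Your outline is essentially correct. The forward direction is clean. For the converse, the extremal argument on a maximum clique $K$ minimizing $|E(G[V\setminus K])|$ works, and your case split on $B_x,B_y,Z$ is the right structure. One imprecision: in the residual ``one-sided'' case ($B_y=\emptyset$, $Z=\{z\}$, $B_x\neq\emptyset$, and $w\in I\setminus\{x,y\}$ with $zw\in E$), the sentence ``if necessary adjoining one vertex of $B_x$'' understates what is needed. After the swap $K'=(K\setminus\{z\})\cup\{x\}$ gives you $w$, you still have to run a short case analysis on which of $xw,yw,pw$ are edges; in two of the eight subcases (namely when $xw,pw\in E$ and possibly $yw\in E$) the five vertices $x,y,z,w,p$ alone do not contain a forbidden subgraph, and you must instead adjoin a \emph{non-neighbor} $q$ of $w$ in $K$ (which exists since $K$ is maximal and $w\notin K$). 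This $q$ may lie in $A$, not $B_x$. With that adjustment, $\{x,q,z,w\}$ (or $\{q,y,w,z\}$) induces a $C_4$, and every subcase closes; one of them actually produces an induced $C_5$ on $\{x,y,w,z,p\}$. So the plan is sound, but the closing step is a genuine eight-way check rather than a one-liner, and the extra vertex should be ``a non-neighbor of $w$ in $K$'' rather than ``a vertex of $B_x$''.
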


\section{A characterization of non-minimally tough graphs}

We first characterize those edges of a $t$-tough graph whose removal does not decrease the toughness.

\begin{lemma} \label{lem:not_minimally_tough}
 Let $t$ be a real number, $G$ be a $t$-tough graph, and let $e = uv$ be an edge in $G$.
 Then, $G-e$ is $t$-tough if and only if both of the following conditions are met.
 \begin{itemize}[topsep=4pt, itemsep=0pt]
  \item[--] There exist at least $2t+1$ pairwise internally vertex-disjoint $u$-$v$ paths in $G$ (including~$uv$).
  \item[--] Every separator $S$ in $G$ that is also a $u$-$v$ separator in $G-e$ satisfies 
   \[ |S| \ge t \cdot \big( \omega(G-S)+1 \big) . \]
 \end{itemize}
\end{lemma}

\begin{proof}
 First, assume that $G-e$ is $t$-tough.
 By \cref{claim:connectivity_and_toughness}, we have $\kappa(G-e) \ge 2 \tau(G-e) \ge 2t$, so by Menger's theorem, there exist at least $2t$ pairwise internally vertex-disjoint paths from $u$ to $v$ in $G-e$, which proves that the first condition holds.
 Consider now a separator $S$ in $G$ that is also a $u$-$v$ separator in $G-e$.
 Then $e$ is a bridge in $G-S$, which implies $\omega \big( (G-e) - S \big) = \omega(G-S)+1$.
 Since $S$ is a separator in $G-e$ and $G-e$ is $t$-tough, we obtain 
 \[ |S| \ge t \cdot \omega \big( (G-e) - S \big) =  t \cdot \big( \omega(G-S)+1 \big) , \]
 which proves that the second condition holds.
 
 \medskip
 
 Conversely, assume that the two conditions hold.
 Let $S$ be an arbitrary separator in $G-e$. 
 We need to show that $|S| \ge t \cdot \omega \big( (G-e) - S \big)$ holds.
 
 Assume first that $S$ is not a separator in $G$. 
 Then $\omega(G-S) = 1$ and $e$ is a bridge in $G-S$, which implies
 \[ \omega \big( (G-e) - S \big) = \omega(G-S) + 1 = 2 . \]
 Thus, $S$ is a $u$-$v$ separator in $G-e$ and so, by the first condition, we have $|S| \ge 2t$. Therefore,
 \[ |S| \ge 2t = t \cdot \omega \big( (G-e) - S \big) . \]
 
 Assume now that $S$ is a separator in $G$, as well as a $u$-$v$ separator in $G-e$. Then, by \cref{bridge}, we have $\omega \big( (G-e) - S \big) = \omega(G-S) + 1$. So by the second condition, we have
 \[ |S| \ge t \cdot \big( \omega(G-S)+1 \big) = t \cdot \omega \big( (G-e) - S \big) . \]
 
 Finally, assume that $S$ is a separator in $G$, but not a $u$-$v$ separator in $G-e$. Then $\omega \big( (G-e) - S \big) = \omega(G-S)$ and using the fact that $G$ is $t$-tough, we obtain 
 \[ |S| \ge t \cdot \omega(G-S) = t \cdot \omega \big( (G-e) - S \big) . \]
 This completes the proof.
\end{proof}

Note that a graph $G$ with $t = \tau(G)$ is not minimally $t$-tough if and only if it contains an edge $e$ such that $G-e$ is $t$-tough. 
Therefore, \Cref{lem:not_minimally_tough} implies the following characterization of not minimally tough graphs.

\begin{theorem} \label{thm:not_minimally_tough}
 Let $t$ be a rational number and let $G$ be a graph with $\tau(G) = t$. 
 Then, $G$ is not minimally $t$-tough if and only if $G$ contains an edge $e = uv$ such that both of the following conditions are met.
 \begin{enumerate}[topsep=3pt, itemsep=0pt, label=(\arabic*)]
  \item \label[condition]{cond:Menger-thm} There exist at least $2t+1$ pairwise internally vertex-disjoint $u$-$v$ paths in $G$ (including~$uv$).
  \item \label[condition]{cond:separators_are_large-thm} Every separator $S$ in $G$ that is also a $u$-$v$ separator in $G-e$ satisfies 
   \[ |S| \ge t \cdot \big( \omega(G-S)+1 \big) . \]
 \end{enumerate}
\end{theorem}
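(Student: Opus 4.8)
The plan is first to reduce the statement to a cleaner equivalent form. Since deleting an edge cannot increase toughness, we always have $\tau(G-e)\le\tau(G)=t$; hence $G$ fails to be minimally $t$-tough precisely when some edge $e$ satisfies $\tau(G-e)=t$, that is, when $G-e$ is still $t$-tough. As the existential quantifier over $e$ passes through both sides of the asserted equivalence, it therefore suffices to prove that, for a fixed edge $e=uv$, the graph $G-e$ is $t$-tough if and only if \cref{cond:Menger,cond:separators_are_large} hold.

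The heart of the argument is a comparison between $\omega(G-S)$ and $\omega\bigl((G-e)-S\bigr)$ for an arbitrary vertex set $S$. Deleting a single edge raises the number of components by at most one, and it does so (by exactly one) if and only if $u,v\notin S$ and $u,v$ lie in distinct components of $(G-e)-S$, that is, if and only if $S$ is a $u$-$v$ separator of $G-e$. I would record this dichotomy explicitly: if $S$ is \emph{not} a $u$-$v$ separator of $G-e$, then $\omega\bigl((G-e)-S\bigr)=\omega(G-S)$, while if it \emph{is}, then $\omega\bigl((G-e)-S\bigr)=\omega(G-S)+1$.

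With this in hand, verifying $t$-toughness of $G-e$ means checking $|S|\ge t\cdot\omega\bigl((G-e)-S\bigr)$ for every separator $S$ of $G-e$, which I would split into three regimes. If $S$ is not a $u$-$v$ separator of $G-e$, it is an honest separator of $G$ with the same component count, so the inequality is inherited from the $t$-toughness of $G$. If $S$ is a $u$-$v$ separator of $G-e$ that is also a separator of $G$ (so $\omega(G-S)\ge 2$), then the required bound reads $|S|\ge t\bigl(\omega(G-S)+1\bigr)$, which is exactly \cref{cond:separators_are_large}. The only remaining regime is $S$ a $u$-$v$ separator of $G-e$ with $\omega(G-S)=1$; here $\omega\bigl((G-e)-S\bigr)=2$, so the needed bound is $|S|\ge 2t$. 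To control this last case I would reinterpret \cref{cond:Menger}: because $u$ and $v$ are nonadjacent in $G-e$, Menger's theorem equates the maximum number of internally vertex-disjoint $u$-$v$ paths in $G-e$ with the minimum size of a $u$-$v$ separator of $G-e$, and discarding the single-edge path $uv$ shows that \cref{cond:Menger} is equivalent to every $u$-$v$ separator of $G-e$ having at least $2t$ vertices. Running this same three-regime analysis in the reverse direction yields the forward implication: $t$-toughness of $G-e$ forces \cref{cond:separators_are_large} (by letting $S$ range over separators of $G$ that are $u$-$v$ separators of $G-e$) and forces the minimum-cut bound, hence \cref{cond:Menger}.

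The main obstacle is the bookkeeping that isolates these three regimes and matches each to the correct hypothesis; in particular one must check that \cref{cond:separators_are_large} is genuinely the operative (and strictly stronger) bound exactly when $\omega(G-S)\ge 2$, whereas \cref{cond:Menger} is what governs the $\omega(G-S)=1$ regime. A secondary point requiring care is the integrality in the Menger step: the number of internally vertex-disjoint paths is an integer, so ``at least $2t+1$ paths including $uv$'' and ``minimum $u$-$v$ cut in $G-e$ at least $2t$'' are interchangeable, and this same observation guarantees that $e$ is not a bridge, so that $G-e$ is connected and its toughness behaves as expected.
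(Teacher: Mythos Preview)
Your proposal is correct and follows essentially the same approach as the paper's own proof: both reduce the question to whether $\tau(G-e)=t$ for some edge $e$, and both handle the converse by the identical three-case split on whether a separator $S$ of $G-e$ is (a) not a $u$-$v$ separator in $G-e$, (b) a $u$-$v$ separator in $G-e$ and a separator of $G$, or (c) a $u$-$v$ separator in $G-e$ but not a separator of $G$, invoking respectively the $t$-toughness of $G$, \cref{cond:separators_are_large}, and Menger via \cref{cond:Menger}. The only cosmetic difference is that for the forward direction the paper derives \cref{cond:Menger} from the global bound $\kappa(G-e)\ge 2\tau(G-e)$ (\cref{claim:connectivity_and_toughness}) rather than from the local toughness inequality on $u$-$v$ separators, but this is the same argument packaged differently.
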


As a side remark, let us note that \cref{lem:not_minimally_tough} can be further strengthened by the use of the following lemma.

\begin{lemma} \label{lemma:technical_lemma_about_separators}
 Let $t$ be a real number, $G$ be a graph, and let $e = uv$ an edge in $G$.
 Then, the following two conditions are equivalent.
 \begin{enumerate}[topsep=3pt, itemsep=0pt, label=(\alph*)]
  \item \label[condition]{cond:separator_simple_version} 
  Every separator $S$ in $G$ that is also a $u$-$v$ separator in $G-e$ satisfies: 
  \[ |S| \ge t \cdot \big( \omega(G-S)+1 \big) . \]
  \item \label[condition]{cond:separator_restricted_version} Every separator $S$ in $G$ that is also a $u$-$v$ separator in $G-e$, and in which every vertex has neighbors in at least two components of ${(G-e)-S}$, satisfies: 
  \[ |S| \ge t \cdot \big( \omega(G-S)+1 \big) . \]
 \end{enumerate}
\end{lemma}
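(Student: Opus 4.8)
The implication \cref{cond:separator_simple_version}~$\Rightarrow$~\cref{cond:separator_restricted_version} is immediate, since the separators quantified over in \cref{cond:separator_restricted_version} form a subfamily of those quantified over in \cref{cond:separator_simple_version}. The plan is therefore to concentrate on the converse, \cref{cond:separator_restricted_version}~$\Rightarrow$~\cref{cond:separator_simple_version}, which I would prove by induction on $|S|$.

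Assume \cref{cond:separator_restricted_version} holds and let $S$ be an arbitrary separator in $G$ that is also a $u$-$v$ separator in $G-e$. Since $S$ separates $u$ from $v$ in $G-e$ while $e=uv$ reconnects their two components, we have $\omega((G-e)-S)=\omega(G-S)+1$, so the target inequality $|S|\ge t\cdot(\omega(G-S)+1)$ is the same as $|S|\ge t\cdot\omega((G-e)-S)$. If every vertex of $S$ has neighbors in at least two components of $(G-e)-S$, then $S$ is exactly of the type appearing in \cref{cond:separator_restricted_version} and the inequality holds by assumption. Otherwise, I would fix a vertex $w\in S$ having neighbors in at most one component of $(G-e)-S$ and set $S'=S\setminus\{w\}$. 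The core of the reduction step is to check that $S'$ is again a separator in $G$ that is also a $u$-$v$ separator in $G-e$, and that $\omega((G-e)-S')\ge\omega((G-e)-S)$: deleting $w$ either lets it join the single component it touches (leaving the component count unchanged) or turns it into a new singleton component (raising the count by one), and in neither case can two distinct components merge, so in particular the components of $u$ and of $v$ stay separated, confirming that $S'$ is a $u$-$v$ separator in $G-e$; consequently $\omega(G-S')=\omega((G-e)-S')-1\ge\omega((G-e)-S)-1=\omega(G-S)\ge 2$, so $S'$ is a separator in $G$ as well. As $|S'|=|S|-1<|S|$, the induction hypothesis applies to $S'$ and gives $|S'|\ge t\cdot\omega((G-e)-S')$.

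It then remains to transfer this back to $S$. Using $t>0$ together with $\omega((G-e)-S')\ge\omega((G-e)-S)$, I would conclude
\[ |S| = |S'| + 1 \ge t\cdot\omega((G-e)-S') + 1 \ge t\cdot\omega((G-e)-S) + 1 \ge t\cdot\omega((G-e)-S) = t\cdot(\omega(G-S)+1), \]
which is precisely the desired inequality for $S$. The induction is well founded because each reduction step strictly decreases $|S|$, and the restricted case, handled directly by \cref{cond:separator_restricted_version}, serves as the stopping case at every level (the base case $S=\emptyset$ being vacuous, as $G$ is connected).

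I expect the main obstacle to be the bookkeeping in the reduction step rather than any deep idea: one must verify carefully that deleting a vertex with neighbors in at most one component preserves both the property of being a $u$-$v$ separator in $G-e$ and the property of being a separator in $G$, and that the component count $\omega((G-e)-\cdot)$ can only increase. This monotonicity is exactly what lets the inequality propagate in the correct direction, and it is the sole point where the argument would break if the two cases (the deleted vertex touching one component versus none) were conflated or mishandled.
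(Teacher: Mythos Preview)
Your proof is correct and follows essentially the same approach as the paper: remove from $S$ a vertex $w$ with neighbors in at most one component of $(G-e)-S$ and induct, the only cosmetic difference being that you induct on $|S|$ while the paper inducts on the number $k(S)$ of such vertices $w$. Your treatment of the component count is in fact slightly more careful than the paper's, which asserts $\omega\big((G-e)-S'\big)=\omega\big((G-e)-S\big)$ where only $\ge$ is guaranteed in general (namely, when $w$ has no neighbors outside $S$ it becomes a new singleton component); since only the inequality is needed, both arguments go through.
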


\begin{proof}
 \Cref{cond:separator_simple_version} trivially implies \cref{cond:separator_restricted_version}.

 To prove the converse, assume that \cref{cond:separator_restricted_version} holds. We show that every separator $S$ in $G$ that is also a $u$-$v$ separator in $G-e$ satisfies $|S| \ge t \cdot \big( \omega(G-S)+1 \big)$ by induction on $k = k(S)$, defined as the number of vertices in $S$ that have neighbors in at most one component of $(G-e)-S$.
 
 The base case corresponds to $k = 0$, where the desired inequality holds by \cref{cond:separator_restricted_version}. Let $S$ be an arbitrary separator in $G$ that is also a $u$-$v$ separator in $G-e$ with $k = k(S) > 0$, and assume that $|S'| \ge t \cdot \big( \omega(G-S')+1 \big)$ holds for any separator $S'$ in $G$ that is also a $u$-$v$ separator in $G-e$ with $k(S') < k$. Let $w \in S$ be a vertex that has neighbors in at most one component of $(G-e)-S$ and let $S' = S \setminus \{w\}$. It is not difficult to see that $S'$ is a separator in $G$ and also a $u$-$v$ separator in $G-e$, and $\omega \big( (G-e) - S' \big) = \omega \big( (G-e) - S \big)$, and $k(S') < k(S)$. Thus, by the induction hypothesis and by \cref{bridge},
 \[ |S| > |S'| \ge t \cdot \big( \omega(G-S')+1 \big) = t \cdot \omega \big( (G-e)-S' \big) = t \cdot \omega \big( (G-e) - S \big) = t \cdot \big( \omega(G-S)+1 \big) , \]
which completes the proof.
\end{proof}

Using \cref{thm:not_minimally_tough}, we now derive the following sufficient condition for a graph not to be minimally tough.

\begin{lemma} \label{lemma:sufficient_condition_for_not_minttough}
 Let $t$ be a positive rational number and let $G$ be a graph containing two adjacent vertices $u$ and $v$ such that $u$ and $v$ have at least $2t$ common neighbors, at least $t$ of which have all their neighbors in $N(u) \cup N(v)$. Then $G$ is not minimally $t$-tough.
\end{lemma}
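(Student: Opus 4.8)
The plan is to apply \cref{thm:not_minimally_tough} to the edge $e = uv$. First I would dispose of the degenerate cases: if $\tau(G) \neq t$, then $G$ violates the defining requirement $\tau(G) = t$ and is trivially not minimally $t$-tough. This also absorbs the cases where $G$ is complete (then $\tau(G) = \infty \neq t$) or disconnected (then $\tau(G) = 0 \neq t$, using $t > 0$). Hence I may assume $\tau(G) = t$, which forces $G$ to be connected and noncomplete, so \cref{thm:not_minimally_tough} applies, and it suffices to verify \cref{cond:Menger,cond:separators_are_large} for the edge $uv$.

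For \cref{cond:Menger}, let $c = |N(u) \cap N(v)| \ge 2t$. Each common neighbor $w$ yields the length-two path $u\,w\,v$; these $c$ paths together with the edge $uv$ are pairwise internally vertex-disjoint, giving $c + 1 \ge 2t + 1$ such paths, as required. This part is immediate.

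The substance lies in \cref{cond:separators_are_large}. Let $W \subseteq N(u) \cap N(v)$ be the set of common neighbors all of whose neighbors lie in $N(u) \cup N(v)$, so $|W| \ge t$, and consider any separator $S$ of $G$ that is also a $u$-$v$ separator of $G - uv$. The first step would be to observe that $N(u) \cap N(v) \subseteq S$: since $u, v \notin S$, each length-two path $u\,w\,v$ survives in $(G - uv) - S$ unless $w \in S$, so every common neighbor, and in particular every vertex of $W$, lies in $S$. The key step is then to compare $S$ with $S \setminus W$. Because $u$ and $v$ are adjacent in $G$, they lie in one common component $C_1$ of $G - S$, and every other component avoids $N(u) \cup N(v)$, since any vertex adjacent to $u$ or $v$ lands in $C_1$. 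As each $w \in W$ satisfies $N(w) \subseteq N(u) \cup N(v)$, its surviving neighbors in $G - (S \setminus W)$ lie in $\big((N(u) \cup N(v)) \setminus S\big) \cup W \subseteq C_1 \cup W$; since moreover $w$ is adjacent to $u \in C_1$, every vertex of $W$ merely joins $C_1$ while no other component changes. Thus $\omega\big(G - (S \setminus W)\big) = \omega(G - S) \ge 2$, so $S \setminus W$ is still a separator of $G$, and the $t$-toughness of $G$ gives $|S \setminus W| \ge t \cdot \omega(G - S)$. Adding $|W| \ge t$ then yields
\[ |S| = |S \setminus W| + |W| \ge t \cdot \omega(G - S) + t = t\big(\omega(G - S) + 1\big), \]
which is exactly \cref{cond:separators_are_large}.

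I expect the main obstacle to be this last comparison — specifically, justifying that deleting $W$ from the separator leaves the number of components unchanged. The delicate point is that a returned vertex of $W$ could a priori reconnect two distinct components of $G - S$ or attach to a component other than $C_1$; the hypothesis that the vertices of $W$ have all their neighbors confined to $N(u) \cup N(v)$ is precisely what forces them to attach only to $C_1$, preserving the component count. Everything else is routine, and the two verified conditions then yield the conclusion via \cref{thm:not_minimally_tough}.
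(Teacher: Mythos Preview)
Your proposal is correct and follows essentially the same route as the paper: reduce to $\tau(G)=t$, then verify \cref{cond:Menger,cond:separators_are_large} of \cref{thm:not_minimally_tough} for the edge $uv$, using the set $W$ (the paper calls it $T$) to shrink $S$ to a separator $S\setminus W$ with the same component count and conclude via $t$-toughness. Your write-up is in fact more explicit than the paper's, which simply asserts that ``the definition of $T$ implies that $S'$ is a separator in $G$ with $\omega(G-S')=\omega(G-S)$'' without spelling out the component argument you give.
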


\begin{proof}
 We can assume that $\tau(G) = t$, otherwise $G$ is clearly not minimally $t$-tough. 
 We verify that the graph $G$ and its edge $e=uv$ satisfy \cref{cond:Menger-thm,cond:separators_are_large-thm} of \cref{thm:not_minimally_tough}. 
 Clearly, \cref{cond:Menger-thm} follows from the assumption that $u$ and $v$ are adjacent and have at least $2t$ common neighbors. 
 For \cref{cond:separators_are_large-thm}, let $S$ be an arbitrary separator in $G$ that is also a $u$-$v$ separator in $G-e$. 
 Let
 \[ W = \big\{ w \in N(u) \cap N(v) \bigm| N(w) \subseteq N(u) \cup N(v) \big\} \,. \]
 By the assumption of the lemma, $|W| \ge t$ holds. 
 Furthermore, $W \subseteq S$ since $S$ is a $u$-$v$ separator in $G-e$. 
 Consider the vertex set $S' = S \setminus W$. 
 Since $u$ and $v$ are adjacent in $G$ and $S$ is a $u$-$v$ separator in $G-e$, the vertices $u$ and $v$ must belong to the same component of $G-S$; let us denote this component by $C$. Since $N(u) \cup N(v) \subseteq V(C) \cup S$, it follows that for every vertex $w\in W$, we have $N(w) \subseteq V(C) \cup W \cup S'$. Thus $S'$ is a separator in $G$ with $\omega(G-S') = \omega(G-S)$.
 Since $G$ is $t$-tough, we obtain
 \[ |S| = |S'| + |W| \ge t \cdot \omega(G-S') + t = t \cdot \big( \omega(G-S') + 1 \big) = t \cdot \big( \omega(G-S) + 1 \big) . \]
 Therefore, \cref{cond:separators_are_large-thm} holds as well, and by \cref{thm:not_minimally_tough}, we conclude that $G$ is not minimally $t$-tough.
\end{proof}

\section{Chordal graphs}

In this section, we present several applications of \cref{lemma:sufficient_condition_for_not_minttough} to derive results on minimally tough, chordal graphs.
First of all, let us note that \cref{lemma:sufficient_condition_for_not_minttough} can be used to provide an alternative proof of \cref{thm:minttough_chordal_with_t_between_1/2_and_1}; for this alternative proof, see the \hyperlink{sec:appendix}{Appendix}.

By \cref{thm:moplexes_in_graphs}, every graph contains moplicial vertices. 
In the following lemma, we study these vertices in minimally $t$-tough, chordal graphs with $t > 1/2$.

\begin{lemma} \label{thm:no_mintough_chordal_graph_with_special_vertices}
 Let $t > 1/2$ be a rational number and let $G$ be a minimally $t$-tough, chordal graph. 
 Then none of the moplicial vertices of $G$ have a maximum neighbor or a maximum neighboring edge.
\end{lemma}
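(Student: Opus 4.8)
The plan is to argue by contradiction via \cref{lemma:sufficient_condition_for_not_minttough}: assuming some moplicial vertex $v$ has a maximum neighbor or a maximum neighboring edge, I will find an edge of $G$ satisfying the hypotheses of that lemma, forcing $G$ not to be minimally $t$-tough. Since $\tau(G)=t$ is finite and positive, $G$ is connected and noncomplete. I would first reduce the maximum-neighbor case to the maximum-neighboring-edge case. If $v$ is its own maximum neighbor, then $N[z]\subseteq N[v]$ for all $z\in N[v]$ forces $N[v]=V(G)$, and since $v$ is simplicial (in a chordal graph every moplicial vertex is simplicial) this makes $G$ complete, a contradiction. If $v$ has a maximum neighbor $u\neq v$, then $d(v)\ge\kappa(G)\ge 2t>1$ (using \cref{claim:connectivity_and_toughness}) guarantees a second neighbor $u'$ of $v$, and the remark after \cref{def:maximumNeighboringEdge} shows $uu'$ is a maximum neighboring edge. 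So it suffices to handle a maximum neighboring edge $uu'$ of $v$.

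Let $M$ be the moplex containing $v$. As $G$ is connected and noncomplete, $N(M)$ is nonempty, hence a minimal separator; thus $N[v]=N[M]=M\cup N(M)$ is a clique (minimal separators of chordal graphs are cliques by \cref{thm:in_chordal_graphs_moplicial_means_simplicial}), and $|N(M)|\ge\kappa(G)\ge 2t$. Since $u,u'\in N(v)\subseteq N[v]$, they are adjacent, so I can apply \cref{lemma:sufficient_condition_for_not_minttough} to the edge $uu'$. The key observation is that every $z\in N[v]\setminus\{u,u'\}$ is a common neighbor of $u$ and $u'$ (because $N[v]$ is a clique) and satisfies $N(z)\subseteq N[z]\subseteq N[u]\cup N[u']=N(u)\cup N(u')$, the last equality holding because $u\in N(u')$ and $u'\in N(u)$. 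Hence there are at least $d(v)-1=|M|+|N(M)|-2$ common neighbors of $u$ and $u'$, all of whose neighbors lie in $N(u)\cup N(u')$.

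It remains to verify the two numerical hypotheses of \cref{lemma:sufficient_condition_for_not_minttough}, and I would split on $|M|$. If $|M|\ge 2$, then $d(v)-1\ge 2t$, so these confined common neighbors already give at least $2t$ common neighbors, at least $t$ of them confined, and the lemma delivers the contradiction. The delicate case, which I expect to be the main obstacle, is $|M|=1$: there $d(v)-1=|N(M)|-1$ may be as small as $2t-1$, one short of the required $2t$ common neighbors. To repair this I would exhibit one extra common neighbor outside $N[v]$. Now $N(v)=N(M)$ is a minimal separator, so by \cref{lemma:minseparators_in_chordal_graphs} it equals $Q_1\cap Q_2$ for two maximal cliques $Q_1,Q_2$ adjacent in a clique tree; at most one of them is $N[v]$, so there is a maximal clique $Q\supseteq N(v)$ with $Q\neq N[v]$. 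Since $N(v)$ is not maximal we have $Q\supsetneq N(v)$, and $v\notin Q$ (otherwise $Q\supseteq N[v]$ forces $Q=N[v]$); hence any $y\in Q\setminus N(v)$ satisfies $y\notin N[v]$ and is adjacent to every vertex of $N(v)$, in particular to $u$ and $u'$.

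This $y$ is a common neighbor of $u,u'$ distinct from all those in $N[v]$, raising the total to $d(v)\ge 2t$, while the confined count $d(v)-1\ge 2t-1$ is at least $\lceil t\rceil$ for every $t>1/2$; thus \cref{lemma:sufficient_condition_for_not_minttough} applies and $G$ is not minimally $t$-tough, the desired contradiction. Beyond the boundary case just described, the only things to check are routine: that $y$ really lies outside $N[v]$ and is distinct from the confined common neighbors, and that the ceiling inequalities for the confined count hold in both regimes $t\in(1/2,1]$ and $t>1$. Everything else reduces to the two structural facts that $N[v]$ is a clique and that $N[u]\cup N[u']=N(u)\cup N(u')$.
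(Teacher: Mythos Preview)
Your argument is correct and follows essentially the same strategy as the paper: reduce the maximum-neighbor case to the maximum-neighboring-edge case, then apply \cref{lemma:sufficient_condition_for_not_minttough} to the edge $uu'$, using that $N[v]\setminus\{u,u'\}$ consists of confined common neighbors.

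There is one organizational difference worth noting. The paper first invokes \cref{thm:minttough_chordal_with_t_between_1/2_and_1} to assume $t>1$, then uses \cref{lemma:moplexes_in_clique_trees} to place $Q=N[v]$ as a leaf of a clique tree, and bounds $|Q\setminus\{u,u'\}|\ge 2t-1>t$; it does not explicitly verify the ``at least $2t$ common neighbors'' hypothesis of \cref{lemma:sufficient_condition_for_not_minttough}. Your case split on $|M|$ addresses exactly this point: for $|M|\ge 2$ the confined count already reaches $2t$, while for $|M|=1$ you produce one additional common neighbor $y\in Q\setminus N[v]$ via \cref{lemma:minseparators_in_chordal_graphs}. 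This makes your argument self-contained (it does not rely on \cref{thm:minttough_chordal_with_t_between_1/2_and_1}) and arguably more complete on the numerical side, at the cost of the extra case analysis; the paper's version is shorter but leans on the reader to see where the remaining common neighbor comes from.
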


\begin{proof}
 Suppose to the contrary that $G$ is a minimally $t$-tough, chordal graph containing a moplicial vertex $s$ with a maximum neighbor or with a maximum neighboring edge. First, we show that in both cases, $s$ has a maximum neighboring edge that satisfies some additional properties.
 
 Since $t \ne 0$ and $t \ne \infty$, the graph $G$ is connected and noncomplete. By \cref{thm:minttough_chordal_with_t_between_1/2_and_1}, we have $t > 1$.
 
 By \Cref{cor:moplicial-is-simplicial}, the vertex $s$ is simplicial.
 Since $s$ is moplicial, there exists a moplex $X$ containing $s$. 
 Since $s$ is simplicial, $N[s] = N[X]$ is the unique maximal clique containing $s$. 
 Let $Q = N[X]$. 
 By \cref{lemma:moplexes_in_clique_trees}, since $G$ is connected and noncomplete, $Q$ corresponds to a leaf of some clique tree $T$ of $G$. 
 Hence, there is a maximal clique $Q'$ of $G$ that corresponds to the only neighbor of this leaf in $T$.
 By \cref{lemma:minseparators_in_chordal_graphs}, we have that $Q \cap Q'$ is a minimal separator in $G$, so by \cref{claim:connectivity_and_toughness}, we have $|Q \cap Q'| \ge 2t$. 
 Since $Q \ne Q'$, it follows that $|Q| \ge |Q \cap Q'| + 1 \ge 2t + 1$.

 Note that \cref{claim:connectivity_and_toughness} implies that $s$ has at least two neighbors.
 Since $G$ is connected and noncomplete, $s$ cannot be its own maximum neighbor.
 Therefore, by \Cref{mne}, if $s$ has a maximum neighbor $x$, then for any other neighbor $x'$ of $s$, the edge $xx'$ is a maximum neighboring edge of $s$. 
 Consequently, $s$ always has a maximum neighboring edge.
 Furthermore, since $|Q \cap Q'| \ge 2t > 2$ and since for any $y\in Q\setminus Q'$ and $y'\in Q\cap Q'$, we have $N[y]\subsetneq N[y']$, we can conclude that $s$ has a maximum neighboring edge $uv$ such that $\{u,v\}\subseteq Q \cap Q'$.

 Therefore, we have $(Q\cup Q')\setminus \{u,v\}\subseteq N(u)\cap N(v)$. 
 This implies that the number of common neighbors of $u$ and $v$ is at least $|Q\cup Q'|-2\ge (|Q|+1)-2\ge 2t$.
 
 Now consider the vertices of $Q \setminus \{ u, v \}$. 
 Clearly, they are common neighbors of $u$ and $v$, and since $Q = N[s]$ and $uv$ is a maximum neighboring edge of $s$, the vertices of $Q \setminus \{ u, v \}$ have all their neighbors in $N[u] \cup N[v]$. 
 Furthermore,
 \[ \big| Q \setminus \{ u, v \} \big| = |Q| - 2 \ge (2t+1) - 2 = 2t - 1 > t \,, \] where the last inequality is valid since $t > 1$. 
 Therefore, \cref{lemma:sufficient_condition_for_not_minttough} implies that $G$ is not minimally $t$-tough, which is a contradiction.
\end{proof}

\subsection{Strongly chordal graphs} \label{sec:strongly_chordal}

\cref{thm:no_mintough_chordal_graph_with_special_vertices} has several interesting consequences.
The first one is related to strongly chordal graphs and relies on \cref{thm:strongly_chordal_characterization}.

\begin{theorem} \label{corollary:strongly_chordal}
 For any rational number $t > 1/2$, there exists no minimally $t$-tough, strongly chordal graph.
\end{theorem}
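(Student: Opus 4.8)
The plan is to deduce this statement as a short corollary of \cref{thm:no_mintough_chordal_graph_with_special_vertices}, using the characterization of strongly chordal graphs via simple vertices. Suppose for contradiction that there is a rational number $t > 1/2$ and a minimally $t$-tough, strongly chordal graph $G$. Since $t$ is finite and positive, $G$ is connected and noncomplete (a complete graph is minimally $\infty$-tough). Note that every strongly chordal graph is chordal, so $G$ is in particular a minimally $t$-tough chordal graph with $t > 1/2$, which is precisely the setting in which \cref{thm:no_mintough_chordal_graph_with_special_vertices} applies.

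First, I would extract a simple vertex from $G$. By \cref{thm:strongly_chordal_characterization}, being strongly chordal is equivalent to the property that each induced subgraph has a simple vertex; applying this to the induced subgraph $G$ itself produces a simple vertex $s$ of $G$. Next, I would invoke the two properties of simple vertices recorded in the preliminaries: every simple vertex has a maximum neighbor, and by \cref{lemma:simple_means_moplicial} every simple vertex is moplicial. Hence $s$ is a moplicial vertex of $G$ that possesses a maximum neighbor.

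This, however, directly contradicts \cref{thm:no_mintough_chordal_graph_with_special_vertices}, which guarantees that in a minimally $t$-tough chordal graph with $t > 1/2$ no moplicial vertex has a maximum neighbor (nor a maximum neighboring edge). The contradiction shows that no such $G$ exists, completing the proof.

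I expect essentially no technical obstacle here, since all the substantive work has been front-loaded into \cref{thm:no_mintough_chordal_graph_with_special_vertices} and \cref{lemma:simple_means_moplicial}. The only points requiring a moment of care are bookkeeping ones: confirming that the hypotheses of the moplicial-vertex lemma are met (strongly chordal implies chordal, and the value $t > 1/2$ is inherited) and that the excluded complete case does not arise because finite toughness forces $G$ to be noncomplete.
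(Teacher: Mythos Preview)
Your proposal is correct and follows essentially the same argument as the paper: extract a simple vertex via \cref{thm:strongly_chordal_characterization}, note it is moplicial by \cref{lemma:simple_means_moplicial} and has a maximum neighbor, and conclude via \cref{thm:no_mintough_chordal_graph_with_special_vertices}. The only cosmetic difference is that you phrase it as a proof by contradiction and add the connected/noncomplete bookkeeping explicitly, whereas the paper states it directly and leaves that bookkeeping to the cited lemma.
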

\begin{proof}
 Let $t > 1/2$ be a rational number and let $G$ be a strongly chordal graph. By \cref{thm:strongly_chordal_characterization}, the graph $G$ contains a simple vertex $s$, and by \cref{lemma:simple_means_moplicial}, the vertex $s$ is moplicial. 
 By a previous observation, since every simple vertex has a maximum neighbor, \cref{thm:no_mintough_chordal_graph_with_special_vertices} implies that $G$ is not minimally $t$-tough.
\end{proof}

Thus, by \cref{thm:strongly_chordal_characterization}, we can conclude the following.

\begin{corollary}
 For any rational number $t > 1/2$, every minimally $t$-tough graph contains  a hole or an induced $k$-sun for some integer $k \ge 3$.
\end{corollary}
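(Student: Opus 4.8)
The plan is to prove this corollary by contraposition, leveraging the characterization of strongly chordal graphs in \cref{thm:strongly_chordal_characterization} together with the nonexistence result of \cref{corollary:strongly_chordal}. Concretely, I would fix a rational number $t > 1/2$ and a minimally $t$-tough graph $G$, and suppose for contradiction that $G$ contains neither a hole nor an induced $k$-sun for any integer $k \ge 3$.

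The first key step is to translate these two structural assumptions into the hypotheses of Farber's characterization. Since a graph is chordal precisely when it is hole-free (by the definition recalled in the preliminaries), the assumption that $G$ contains no hole gives that $G$ is chordal. Likewise, the assumption that $G$ contains no induced $k$-sun for any $k \ge 3$ is exactly the statement that $G$ is sun-free.

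The second step invokes the equivalence of conditions~(1) and~(2) in \cref{thm:strongly_chordal_characterization}: a graph is strongly chordal if and only if it is chordal and sun-free. Having just established that $G$ is both chordal and sun-free, I can conclude that $G$ is strongly chordal. At this point the contradiction is immediate, since \cref{corollary:strongly_chordal} asserts that for $t > 1/2$ there exists no minimally $t$-tough strongly chordal graph, contradicting the choice of $G$. Hence $G$ must contain a hole or an induced $k$-sun for some $k \ge 3$.

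I do not anticipate any genuine obstacle here: the entire argument is a short deduction once the right combinatorial vocabulary is in place, and all the substantive work has already been done in \cref{thm:no_mintough_chordal_graph_with_special_vertices} and \cref{corollary:strongly_chordal}. The only point requiring a little care is to state the reduction cleanly, making explicit that ``hole-free'' is synonymous with ``chordal'' and that ``no induced $k$-sun'' is synonymous with ``sun-free'', so that the two forbidden-subgraph hypotheses line up exactly with condition~(2) of \cref{thm:strongly_chordal_characterization}.
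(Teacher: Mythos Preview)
Your proposal is correct and follows exactly the route the paper takes: the paper simply states that the corollary follows from \cref{thm:strongly_chordal_characterization} (together with the just-proved \cref{corollary:strongly_chordal}), and your contrapositive spells out precisely that deduction.
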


\subsection{Interval graphs} \label{sec:interval}

Since every interval graph is strongly chordal~\cite{article:interval_graphs}, \cref{corollary:strongly_chordal} directly implies the following.

\begin{corollary} \label{corollary:interval}
 For any rational number $t > 1/2$, there exists no minimally $t$-tough, interval graph.
\end{corollary}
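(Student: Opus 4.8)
The plan is to obtain this corollary immediately from the already-established result on strongly chordal graphs, using only the classical fact that interval graphs form a subclass of strongly chordal graphs. There is essentially one ingredient to invoke, so the argument will be very short.

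First I would recall that every interval graph is strongly chordal; this is a well-known structural fact, cited in the paper as~\cite{article:interval_graphs}. Concretely, an interval graph admits a representation by intervals of the real line, and such a graph is not only chordal but also sun-free, so it is strongly chordal by \cref{thm:strongly_chordal_characterization}. The only subtlety worth double-checking is that this containment is strict and standard, which it is, so no verification beyond citing the inclusion is needed.

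Then, given any interval graph $G$ and any rational number $t > 1/2$, I would apply \cref{corollary:strongly_chordal} directly: since $G$ is strongly chordal and that corollary states that no strongly chordal graph is minimally $t$-tough for $t > 1/2$, it follows at once that $G$ is not minimally $t$-tough. As $G$ was an arbitrary interval graph, this establishes the claim. The main (and indeed only) point of the proof is the class inclusion itself; once it is in place, the desired conclusion is immediate, so there is no real obstacle to overcome here.
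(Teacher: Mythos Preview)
Your proposal is correct and matches the paper's own argument exactly: the paper simply notes that every interval graph is strongly chordal (citing~\cite{article:interval_graphs}) and deduces the corollary immediately from \cref{corollary:strongly_chordal}.
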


Using \cref{thm:interval}, we can equivalently restate \cref{corollary:interval} as follows.

\begin{corollary}
 For any rational number $t > 1/2$, every minimally $t$-tough graph contains an induced subgraph isomorphic to one of the graphs in \Cref{fig:interval}.
\end{corollary}

Let us remark that the result of \Cref{corollary:interval} cannot be generalized to the entire class of asteroidal-triple-free graphs; in fact, not even to the class of co-comparability graphs (which is a subclass of asteroidal-triple-free graphs that properly contains the class of interval graphs).
A graph is a \emph{co-comparability graph} if its complement admits a transitive orientation, that is, if $(u,v)$ and $(v,w)$ are arcs of the orientation, then $(u,w)$ must also be an arc of it.
For an integer $k \ge 2$, consider the graph $G$ consisting of two disjoint cliques of size $k$ and a perfect matching between them.
Then $G$ is a co-bipartite graph (i.e., the complement of a bipartite graph), and thus a co-comparability graph. Furthermore, $G$ is claw-free and $\kappa(G) = k$ since we need to remove at least one vertex from each edge of the perfect matching to disconnect $G$ and $G$ is $k$-regular. 
By a theorem of Matthews and Sumner, see \cite{article:clawfree}*{Theorem 10}, the toughness of any noncomplete claw-free graph is equal to half of its connectivity; hence, $\tau(G) = \kappa(G)/2 = k/2$. 
Since $G$ is $k$-regular, \Cref{claim:connectivity_and_toughness} implies $\tau(G-e) \le \kappa(G-e)/2 \le (k-1)/2$. Therefore, $G$ is a minimally $k/2$-tough, co-comparability graph.

\subsection{Chordal graphs with a universal vertex}

We now move to the study of minimally tough graphs with a \emph{universal vertex}, that is, a vertex adjacent to all the other vertices. 
Using \cref{claim:minttoughlemma,claim:connectivity_and_toughness}, we can derive a complete characterization of minimally tough graphs with a universal vertex and with toughness not exceeding~$1$.

\medskip

\begin{theorem}\label{thm:stars}
 Let $t \le 1$ be a rational number. 
 If $G$ is a minimally $t$-tough graph with a universal vertex, then $G \cong K_{1,\ell}$ and $t=1/\ell$ for some integer $\ell \ge 2$.
\end{theorem}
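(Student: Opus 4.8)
The plan is to pin down the structure of $G$ by repeatedly invoking the sufficient condition for non-minimal toughness from \cref{lemma:sufficient_condition_for_not_minttough}, exploiting one crucial observation: if $w$ is a universal vertex and $v$ is any other vertex, then $N(w) \cup N(v) = V(G)$, because $N(w) = V(G)\setminus\{w\}$ and $w \in N(v)$. Consequently the clause ``having all neighbors in $N(w) \cup N(v)$'' in \cref{lemma:sufficient_condition_for_not_minttough} is satisfied by \emph{every} vertex automatically, so applying that lemma with $u = w$ reduces to counting common neighbors. Throughout, since $\tau(G) = t$ is finite and positive, $G$ is connected and noncomplete.

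First I would show $G$ has exactly one universal vertex. If there were two universal vertices $u_1,u_2$, they would be adjacent with common neighbor set $V(G)\setminus\{u_1,u_2\}$; since a noncomplete graph with two universal vertices has at least four vertices (a third vertex adjacent to both would itself be universal), this set has size at least $2 \ge 2t$, and all its members trivially have neighbors in $N(u_1)\cup N(u_2) = V(G)$, so \cref{lemma:sufficient_condition_for_not_minttough} would contradict minimal toughness. Write $w$ for the unique universal vertex and set $G' = G - w$ and $\ell = |V(G')| \ge 2$. Next I would bound degrees inside $G'$: for a non-universal $v$, the common neighbors of $w$ and $v$ are exactly its neighbors in $G'$, and they all qualify for free. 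If $v$ had two neighbors in $G'$, then since $t \le 1$ we would have at least $2 \ge 2t$ common neighbors and \cref{lemma:sufficient_condition_for_not_minttough} would again contradict minimality. Hence every vertex has at most one neighbor in $G'$, so $G'$ is a disjoint union of $m$ edges and $p$ isolated vertices, with $m + p \ge 2$. Because $G'$ has no cut vertices, deleting vertices from $G'$ never increases its number of components, and since every separator of $G$ must contain $w$, one computes $\tau(G) = 1/(m+p)$, attained by the separator $\{w\}$.

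The final step, which I expect to be the main obstacle, is to rule out $m \ge 1$. The delicate point is that a degree argument alone does not suffice: deleting a matching edge of $G'$ genuinely lowers the toughness, so minimality is not violated by the matching edges, and the contradiction must instead come from a spoke. Assuming $xy$ is a matching edge of $G'$, I would consider the edge $wx$ and argue that $\tau(G - wx) = \tau(G)$, contradicting minimal toughness. The separator $\{w\}$ still leaves $m+p$ components in $G - wx$, giving $\tau(G-wx) \le 1/(m+p)$; for the reverse inequality I would take an arbitrary separator $T$ of $G-wx$ and split into cases: if $w \in T$ the bound reduces to the toughness estimate for $G'$, while if $w \notin T$ the only way to disconnect is to isolate $x$ by deleting its sole remaining neighbor $y$, yielding two components and ratio at least $1/2 \ge 1/(m+p)$. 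This forces $m = 0$, so $G'$ is edgeless, whence $G \cong K_{1,\ell}$ and $t = \tau(G) = 1/\ell$ with $\ell \ge 2$, completing the proof.
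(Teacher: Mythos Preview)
Your proof is correct, but it follows a genuinely different route from the paper's.

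The paper argues directly via \cref{claim:minttoughlemma}: for each edge $uv$ with $u$ universal, the witnessing set $S(uv)$ must contain $N(v)\setminus\{u\}$ and the resulting graph $(G-uv)-S$ has exactly two components, which forces $d(v)<2t+1$. Together with the lower bound $d(v)\ge\kappa(G)\ge\lceil 2t\rceil$ this pins down $d(v)=\lceil 2t\rceil$ exactly, and a case split on $t\le 1/2$ versus $1/2<t\le 1$ finishes the argument (in the latter case $G$ is a star plus a perfect matching on the leaves, whose toughness is at most $1/2$).

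You instead work entirely through \cref{lemma:sufficient_condition_for_not_minttough}, exploiting that with a universal vertex the ``all neighbors in $N(u)\cup N(v)$'' clause is vacuous; this yields uniqueness of the universal vertex and the bound $d_{G'}(v)\le 1$ without ever invoking the connectivity lower bound or splitting on the value of $t$. Your endgame is also different: rather than deriving a contradiction on the value of $\tau(G)$, you compute $\tau(G)=1/(m+p)$ explicitly and then exhibit a concrete spoke $wx$ whose removal preserves the toughness. What the paper's approach buys is brevity and a sharper degree statement; what yours buys is a uniform argument that avoids the case distinction on $t$ and showcases the strength of \cref{lemma:sufficient_condition_for_not_minttough}. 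One small point you leave implicit: the assertion $m+p\ge 2$ does not follow from $|V(G')|\ge 2$ alone, but rather from the uniqueness of the universal vertex (ruling out $G'=K_2$) together with $G$ being noncomplete (ruling out $|V(G')|\le 1$).
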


\begin{proof}
 Let $G$ be a minimally $t$-tough graph, and let $u \in V(G)$ be a universal vertex of $G$. Take a vertex $v \in V(G)$ distinct from $u$. Since $u$ is universal, we have $uv \in E(G)$. Let $S = S(uv) \subseteq V(G)$ be a vertex set guaranteed by \cref{claim:minttoughlemma}. Since $uv$ is a bridge in $G-S$, clearly $u \notin S$. 
 Since $u$ is universal in $G$, the graph $(G-uv)-S$ has exactly two components: the one containing only $v$ and the other containing all the remaining vertices. Therefore, $N(v)\setminus\{ u \} \subseteq S$. 
 Then by \cref{claim:minttoughlemma}, we have
 \[ \frac{\big| N(v) \big| - 1}{t} = \frac{\big| N(v)\setminus\{ u \} \big|}{t} \le \frac{|S|}{t} < \omega \big( (G-uv) - S \big) = 2 , \]
 that is,
 \[ \big| N(v) \big| < 2t+1 . \]
 Note that the above arguments and inequalities also hold in the case when $uv$ is a bridge in $G$, in which case $S = \emptyset$.
 
By \cref{claim:connectivity_and_toughness}, since $\kappa(G)$ is an integer, we have that $\lceil 2t \rceil \le\kappa(G) \le d(v) < 2t+1$ holds for any vertex $v \ne u$; moreover, since $d(v)$ is an integer, we have $d(v) = \lceil 2t \rceil$. This means, that if $t \le 1/2$, then each vertex, except possibly for $u$, is of degree 1. Thus $G$ is a star, that is, $G \cong K_{1,\ell}$ for some integer $\ell \ge 0$, and since the toughness of $K_1$ and $K_2$ is infinity,  we have $G \cong K_{1,\ell}$ and $t=1/\ell$ for some integer $\ell \ge 2$. If $1/2 < t \le 1$, then each vertex, except for $u$, is of degree 2, thus $G$ can be obtained from a star on at least 3 vertices by adding a matching covering every leaf of the star. 
However, no such graph is minimally $t$-tough with $1/2 < t \le 1$: if $G$ has exactly 3 vertices, then $G \cong K_3$, whose toughness is infinity, and if $G$ contains more than 3 vertices, then $u$ is a cut-vertex in $G$, thus the toughness of $G$ is at most $1/2$.
\end{proof}

For the case of toughness $t>1$, a complete characterization of minimally $t$-tough graphs with a universal vertex remains open. 
We provide a partial result in this direction, by identifying an infinite family of such graphs 
and
showing that every minimally tough graph with a universal vertex and with toughness exceeding $1$ contains one of these graphs as an induced subgraph.
This family of graphs is the \emph{wheel graphs}, defined as the class of graphs obtained from a cycle on at least $4$ vertices by adding a universal vertex.
The wheel graph on $n$ vertices is denoted by $W_n$; for some examples, see \Cref{fig:wheel}.

\begin{figure}[ht]
\centering
\begin{tikzpicture}[scale=0.9]
 \tikzstyle{vertex}=[draw,circle,fill=black,minimum size=6,inner sep=0]
 
 \node[vertex] (a1) at (90+0*90:1.5) {};
 \node[vertex] (a2) at (90+1*90:1.5) {};
 \node[vertex] (a3) at (90+2*90:1.5) {};
 \node[vertex] (a4) at (90+3*90:1.5) {};
 
 \node[vertex] (b) at (0,0) {};
 
 \draw (a1) -- (a2) -- (a3) -- (a4) -- (a1);
 \draw (b) -- (a1);
 \draw (b) -- (a2);
 \draw (b) -- (a3);
 \draw (b) -- (a4);
 
 \node at (-1.75,1.5) {$W_5$};
 
 \begin{scope}[shift={(7.5,0)}]
 \node[vertex] (a1) at (90+0*72:1.5) {};
 \node[vertex] (a2) at (90+1*72:1.5) {};
 \node[vertex] (a3) at (90+2*72:1.5) {};
 \node[vertex] (a4) at (90+3*72:1.5) {};
 \node[vertex] (a5) at (90+4*72:1.5) {};
 
 \node[vertex] (b) at (0,0) {};
 
 \draw (a1) -- (a2) -- (a3) -- (a4) -- (a5) -- (a1);
 \draw (b) -- (a1);
 \draw (b) -- (a2);
 \draw (b) -- (a3);
 \draw (b) -- (a4);
 \draw (b) -- (a5);
 
 \node at (-1.75,1.5) {$W_6$};
 \end{scope}
\end{tikzpicture}
\caption{The wheels on 5 and 6 vertices.}
\label{fig:wheel}
\end{figure}

We first show that wheel graphs are indeed minimally tough.

\begin{proposition}
 For any $n \ge 5$, the wheel graph $W_n$ on $n$ vertices is a minimally $t_n$-tough graph with a universal vertex, where $t_n =1+ 2/(n-1)$ when $n$ is odd and $t_n = 1+2/(n-2)$ when $n$ is even.
\end{proposition}

\begin{proof}
 First, we prove that $\tau(W_n) \ge t_n$ by showing that $|S|/\omega(G-S) \ge t_n$ holds for any cutset $S$ of $W_n$. Let $S$ be an arbitrary cutset of $W_n$. Since $S$ is a cutset, it must contain the universal vertex. Note that after the removal of the universal vertex from $W_n$, the graph becomes a cycle; hence $\omega(W_n - S) \le |S| - 1$. Also note that picking an arbitrary vertex from each component of $W_n-S$ forms an independent set of $W_n$, which implies $\omega(W_n - S) \le \lfloor (n-1)/2 \rfloor$. Therefore,
 \[ \frac{|S|}{\omega(W_n - S)} \ge \frac{\omega(W_n - S) + 1}{\omega(W_n - S)} \ge 1 + \frac{1}{\omega(W_n - S)} \ge 1 + \frac{1}{\lfloor (n-1)/2 \rfloor} = t_n. \]
 Thus, $\tau(W_n) \ge t_n$.
 
 Now let $w$ be the universal vertex of $W_n$ (that is, the center of the wheel), let $S'$ be a maximum-size independent set of $W_n$, and let $S = S' \cup \{ w \}$. Then
 \[ \tau(W_n) \le \frac{|S|}{\omega(W_n - S)} = \frac{\lfloor (n-1)/2 \rfloor + 1}{\lfloor (n-1)/2 \rfloor} = 1 + \frac{1}{\lfloor (n-1)/2 \rfloor} = t_n. \]

 Hence, $\tau(W_n) = t_n$.
	
 Finally, we verify that $\tau(W_n - e) < t_n$ for any edge $e$ of $W_n$. Observe that removing any edge $e$ from $W_n$ creates a vertex of degree 2, implying $\tau(W_n - e) \le 1 < t_n$.

 Therefore, $W_n$ is minimally $t_n$-tough.
\end{proof}

A graph $G$ is said to be \emph{wheel-free} if it does not contain any induced subgraph isomorphic to a wheel $W_n$ for some $n\ge 5$.
Now we show that for any rational number $t > 1$, there exists no minimally $t$-tough, wheel-free graph with a universal vertex.
Since a wheel-free graph can only have a universal vertex if it is chordal, it suffices to prove the following.

\begin{theorem} \label{thm:chordal_universal}
 For any rational number $t > 1$, there exists no minimally $t$-tough, chordal graph with a universal vertex.
\end{theorem}

\begin{proof}
 Let $t > 1$ be a rational number and let $G$ be a chordal graph with a universal vertex. We need to show that $G$ is not minimally $t$-tough. We may assume that $G$ is noncomplete; otherwise $G$ is clearly not minimally $t$-tough. 
 Let $U$ be the set of universal vertices in $G$ and let $G' = G-U$. Since $G$ is noncomplete, $G'$ has vertices and $G'$ is also noncomplete.
 
 Thus, by \cref{thm:moplexes_in_graphs}, the graph $G'$ contains a moplex $M$. Now we show that $M$ is also a moplex in $G$. Since $M$ is an inclusion-wise maximal clique module in $G'$ and $U$ is adjacent to every vertex in $G$, we infer that $M$ is also an inclusion-wise maximal clique module in $G$. Since $M$ is a moplex in the noncomplete graph $G'$, it follows that $N_{G'}(M)$ is a minimal separator in $G'$ (even if $N_{G'}(M) = \emptyset$, in which case $G'$ is disconnected). Furthermore, this implies that $N_{G}(M) = N_{G'}(M) \cup U$ is a minimal separator in $G$. Thus $M$ is indeed a moplex in $G$.
 
 Let $s \in M$ and $u \in U$ be arbitrary. Clearly, $s$ is moplicial and $u$ is a maximum neighbor of $s$, so by \cref{thm:no_mintough_chordal_graph_with_special_vertices}, the graph $G$ is not minimally $t$-tough.
\end{proof}

Since a graph with a universal vertex is chordal if and only if it is wheel-free, as an immediate consequence of \Cref{thm:stars,thm:chordal_universal}, we obtain the following.

\begin{corollary} \label{cor:wheel-free_universal}
For any rational number $t > 1/2$, there exists no minimally $t$-tough, wheel-free graph with a universal vertex.
\end{corollary}

In particular, for any rational number $t > 1/2$, there exists no minimally $t$-tough, chordal graph with a universal vertex.

\subsection{Split graphs}\label{sec:split}

Finally, we turn our attention to minimally tough, split graphs.
Katona and Varga gave a complete characterization of minimally tough, split graphs~\cite{article:spec_graph_classes_journal}. In particular, they showed that no minimally $t$-tough, split graph exists for any $t > 1/2$.
This latter result can also be derived using our findings regarding moplexes; for this alternative proof, see the \hyperlink{sec:appendix}{Appendix}.

Let us also note that using \cref{thm:split_characterization}, the theorem on the non-existence of minimally $t$-tough, split graphs with $t > 1/2$ can be equivalently stated as follows.

\begin{corollary}
 For any rational number $t > 1/2$, every minimally $t$-tough graph contains either an induced 4-cycle or an induced 5-cycle or a pair of independent edges as an induced subgraph.
\end{corollary}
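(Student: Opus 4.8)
The plan is to derive this corollary directly from \cref{thm:split_graphs} by combining it with the Földes--Hammer characterization of split graphs stated in \cref{thm:split_characterization}. The key observation is that the three forbidden configurations named in the corollary---an induced $4$-cycle, an induced $5$-cycle, and a pair of independent edges---are precisely the graphs $C_4$, $C_5$, and $2K_2$ that appear in \cref{thm:split_characterization}. Consequently, a graph avoids all three of them as induced subgraphs if and only if it is a split graph, so the corollary is merely the contrapositive of \cref{thm:split_graphs} translated into the language of forbidden induced subgraphs.

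Concretely, I would fix an arbitrary rational number $t > 1/2$ and an arbitrary minimally $t$-tough graph $G$, and argue by contradiction. Suppose that $G$ contains none of $C_4$, $C_5$, or $2K_2$ as an induced subgraph; that is, suppose $G$ is $(C_4, C_5, 2K_2)$-free. Then \cref{thm:split_characterization} implies that $G$ is a split graph. But \cref{thm:split_graphs} asserts that for every rational $t > 1/2$ no split graph is minimally $t$-tough, which contradicts the choice of $G$. Hence $G$ must contain at least one of the three graphs as an induced subgraph, which is exactly the assertion of the corollary.

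Since the entire substance of the result has already been established in \cref{thm:split_graphs}, there is no genuine obstacle here: the corollary is a purely formal restatement obtained by rephrasing membership in the class of split graphs through its forbidden-induced-subgraph description. The only point deserving a moment's care is to confirm that the informal phrase ``a pair of independent edges'' in the statement coincides with the induced subgraph $2K_2$ appearing in \cref{thm:split_characterization}, which it does by definition, so the translation between the two formulations is exact.
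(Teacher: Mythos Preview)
Your proposal is correct and matches the paper's own reasoning: the paper does not give a separate proof but simply remarks that, by \cref{thm:split_characterization}, the corollary is an equivalent restatement of \cref{thm:split_graphs}. Your contrapositive argument spells this out explicitly and is exactly the intended derivation.
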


In conclusion, let us note that if \Cref{conj:chordal} were true, then it would provide a common generalization of \crefnosort{thm:minttough_chordal_with_t_between_1/2_and_1,corollary:strongly_chordal,thm:chordal_universal,thm:split_graphs}.

\section*{Acknowledgments}

The authors are grateful to the anonymous reviewers for their helpful comments.
The research of Blas Fern\'andez is supported in part by the Slovenian Research and Innovation Agency (research program P1-0285 and research project J1-50000).
The research of Gyula Y.~Katona is supported by the BME-Artificial Intelligence FIKP grant of EMMI (BME FIKP-MI/SC), and the research  project K-132696 by the Ministry of Innovation and Technology of Hungary from the National Research, Development and Innovation Fund.
The research of Martin Milanič is supported in part by the Slovenian Research and Innovation Agency (I0-0035, research program P1-0285 and research projects J1-3003, J1-4008, J1-4084, J1-60012, and N1-0370) and by the research program CogniCom (0013103) at the University of Primorska.
The research of Kitti Varga is supported by the Ministry of Innovation and Technology of Hungary from the National Research, Development and Innovation Fund -- grant number ADVANCED 150556.
 
\begin{bibdiv}
\begin{biblist}
\bib{article:9/4}{article}{
 title={Not every 2-tough graph is Hamiltonian},
 author={D.~Bauer},
 author={H.~J.~Broersma},
 author={H.~J.~Veldman},
 journal={Discrete Applied Mathematics},
 volume={99},
 pages={317--321},
 date={2000},
}

\bib{article:3/2-disproof}{inproceedings}{
 title={Hamiltonian graphs},
 author={J.~C.~Bermond},
 book={
  title={Selected Topics in Graph Theory},
  editor={L.~Beinecke},
  editor={R.~J.~Wilson},
  publisher={Academic Press},
  address={London},
  year={1978},
 },
 pages={127--167},
}

\bib{article:moplexes_in_graphs}{article}{
 title={Separability generalizes Dirac's theorem},
 author={A.~Berry},
 author={J.-P.~Bordat},
 journal={Discrete Applied Mathematics},
 volume={84},
 number={1--3},
 pages={43--53},
 date={1998},
}

\bib{article:moplexes_in_clique_trees}{article}{
 title={Moplex elimination orderings},
 author={A.~Berry},
 author={J.-P.~Bordat},
 journal={Electronic Notes in Discrete Mathematics},
 volume={8},
 pages={6--9},
 date={2001},
}

\bib{article:clique_trees}{inproceedings}{
 title={An introduction to chordal graphs and clique trees},
 author={J.R.S.~Blair},
 author={B.~Peyton},
 book={
  title={Graph theory and sparse matrix computation},
  series={The IMA Volumes in Mathematics and its Applications \textbf{56}},
  editor={A.~George},
  editor={J.~R.~Gilbert},
  editor={J.~W.~H.~Liu},
  publisher={Springer},
  address={New York},
  year={1993},
 },
 pages={1--29},
}

\bib{article:min_tough}{article}{
 title={Various results on the toughness of graphs},
 author={H.~Broersma},
 author={E.~Engsberg},
 author={H.~Trommel},
 journal={Networks},
 volume={33},
 pages={233--238},
 date={1999},
}

\bib{Cao2025Structure}{article}{
 title={The structure of minimally 1-tough graphs with small independence number},
 author={S.~Cao},
 author={J.~Chen},
 author={W.~Zheng},
 journal={Graphs and Combinatorics},
 volume={41},
 number={2},
 pages={article no.~37},
 year={2025},
}

\bib{article:toughness_introduction}{article}{
 title={Tough graphs and hamiltonian circuits},
 author={V.~Chv\'{a}tal},
 journal={Discrete Mathematics},
 volume={5},
 pages={215--228},
 date={1973},
}

\bib{article:rigid_circuit}{article}{
 title={On rigid circuit graphs},
 author={G.~A.~Dirac},
 journal={Abhandlungen aus dem Mathematischen Seminar der Universit\"{a}t Hamburg},
 volume={25},
 pages={71--76},
 date={1961},
}

\bib{article:2factor}{article}{
 title={Toughness and the existence of $k$-factors},
 author={H.~Enomoto},
 author={B.~Jackson},
 author={P.~Katerinis},
 author={A.~Saito},
 journal={Journal of Graph Theory},
 volume={9},
 pages={87--95},
 date={1985},
}

\bib{article:strongly_chordal_characterization}{article}{
 title={Characterizations of strongly chordal graphs},
 author={M.~Farber},
 journal={Discrete Mathematics},
 volume={43},
 number={2--3},
 pages={173--189},
 date={1983},
}

\bib{article:split_characterization}{article}{
 title={Split graphs},
 author={S.~F\"{o}ldes},
 author={P.~L.~Hammer},
 journal={Congressus Numerantium},
 volume={19},
 pages={311--315},
 date={1977},
}

\bib{article:intersection_graphs_of_subtrees}{article}{
 title={The intersection graphs of subtrees in trees are exactly the chordal graphs},
 author={F.~Gavril},
 journal={Journal of Combinatorial Theory, Series B},
 volume={16},
 pages={47--56},
 date={1974},
}

\bib{book:minseparator_characterization}{book}{
 title={Algorithmic Graph Theory and Perfect Graphs},
 author={M.~C.~Golumbic},
 date={2004},
 series={Annals of Discrete Mathematics \textbf{57}},
 publisher={Elsevier Science B.V.},
 address={Amsterdam},
}

\bib{Katona2024Minimally}{article}{
 title={Minimally tough chordal graphs with toughness at most 1/2},
 author={G.~Y.~Katona},
 author={H.~Khan},
 journal={Discrete Mathematics},
 volume={347},
 number={8},
 pages={article no.~113491},
 year={2024},
}

\bib{article:dp}{article}{
 title={The complexity of recognizing minimally t-tough graphs},
 author={G.~Y.~Katona},
 author={I.~Kov\'{a}cs},
 author={K.~Varga},
 journal={Discrete Applied Mathematics},
 volume={294},
 date={2021},
 pages={55--84},
}

\bib{article:min1toughgraphs}{article}{
 title={Properties of minimally t-tough graphs},
 author={G.~Y.~Katona},
 author={D.~Solt\'{e}sz},
 author={K.~Varga},
 journal={Discrete Mathematics},
 volume={341},
 date={2018},
 pages={221--231},
}

\bib{article:spec_graph_classes_journal}{article}{
 title={Minimal toughness in special graph classes},
 author={G.~Y.~Katona},
 author={K.~Varga},
 journal={Discrete Mathematics and Theoretical Computer Science},
 volume={25:3},
 pages={article no.~1},
 year={2023},
}

\bib{article:interval_graphs}{article}{
 title={Representation of a finite graph by a set of intervals on the real line},
 author={C.~G.~Lekkerkerker},
 author={J.~Ch.~Boland},
 journal={Fundamenta Mathematicae},
 volume={51},
 date={1962},
 pages={45--64},
}

\bib{Ma2023Minimum}{article}{
 title={On the minimum degree of minimally 1-tough, triangle-free graphs and minimally 3/2-tough, claw-free graphs},
 author={H.~Ma},
 author={X.~Hu},
 author={W.~Yang},
 journal={Discrete Mathematics},
 volume={346},
 number={6},
 pages={article no.~113352},
 year={2023},
}

\bib{Ma2024Structure}{article}{
 title={The structure of minimally $t$-tough, $2K_2$-free graphs},
 author={H.~Ma},
 author={X.~Hu},
 author={W.~Yang},
 journal={Discrete Applied Mathematics},
 volume={346},
 pages={1--9},
 year={2024},
}

\bib{article:clawfree}{article}{
 title={Hamiltonian results in $K_{1,3}$-free graphs},
 author={M.~M.~Matthews},
 author={D.~P.~Sumner},
 journal={Journal of Graph Theory},
 volume={8},
 pages={139--146},
 date={1984},
}

\bib{article:mintoughnessthesis}{thesis}{
 title={Properties of minimally tough graphs},
 author={K.~Varga},
 type={Ph.D. Thesis},
 date={2021},
 organization={Department of Computer Science and Information Theory, Budapest University of Technology and Economics},
 eprint={http://hdl.handle.net/10890/15781},
}

\bib{Zheng2024Disproof}{article}{
 title={Disproof of a conjecture on minimally $t$-tough graphs},
 author={W.~Zheng},
 author={L.~Sun},
 journal={Discrete Mathematics},
 volume={347},
 number={7},
 pages={article no.~113982},
 year={2024},
}
\end{biblist}
\end{bibdiv}

\clearpage

\appendix

\section{Appendix} \linkdest{sec:appendix}

To demonstrate further applications of our results, we present alternative proofs for two known theorems on the non-existence of minimally $t$-tough, chordal graphs for $1/2 < t \le 1$, and on the non-existence of minimally $t$-tough, split graphs for $t > 1/2$. Let us note that these proofs differ substantially from those given in~\cite{article:spec_graph_classes_journal}.

\subsection{Alternative proof of Theorem~\ref{thm:minttough_chordal_with_t_between_1/2_and_1}}

We start with re-proving the non-existence of minimally $t$-tough, chordal graphs for $1/2 < t \le 1$.

\begin{proof}[Proof of \cref{thm:minttough_chordal_with_t_between_1/2_and_1}]
 Let $t \in (1/2,1]$ be a rational number and let $G$ be a chordal graph with $\tau(G) = t$. Now we prove that $G$ is not minimally $t$-tough.
 
 Since $\tau(G) = t$, the graph $G$ is connected and noncomplete. By \cref{thm:clique_tree}, since $G$ is a connected chordal graph, it has a clique tree $T$. Since $G$ is noncomplete, $T$ has at least two nodes.
 
 Let $Q$ be a maximal clique in $G$ corresponding to a leaf of $T$, and let $Q'$ be the maximal clique of $G$ corresponding to the unique neighbor of this leaf in $T$. By \cref{lemma:minseparators_in_chordal_graphs}, we have that $S = Q \cap Q'$ is a minimal separator in $G$. By \cref{claim:connectivity_and_toughness}, we have $|S| \ge 2t > 1$, i.e., $|S| \ge 2$, thus $S$ contains two adjacent vertices $u$ and $v$.
 
 Since $Q$ and $Q'$ are distinct maximal cliques in $G$, there exist a vertex $x \in Q \setminus Q'$ and a vertex $y \in Q' \setminus Q$. Clearly, the vertices $x$ and $y$ are common neighbors of $u$ and $v$, thus $u$ and $v$ have at least $2 \ge 2t$ common neighbors. Now, we show that at least $1 \ge t$ of the common neighbors of $u$ and $v$, namely $x$, have all their neighbors in $N(u) \cup N(v)$. For this, observe that the set of vertices of $T$ that correspond to maximal cliques containing any fixed vertex in $G$ induces a subtree of $T$. Since $x \in Q \setminus Q'$ and $Q$ corresponds to a leaf in $T$, this implies that $Q$ is the only maximal clique containing $x$ in $G$. Therefore $N(x) = V(Q) \subseteq N(u) \cup N(v)$. So by \cref{lemma:sufficient_condition_for_not_minttough}, we infer that $G$ is not minimally $t$-tough.
\end{proof}

\subsection{Alternative proof of the non-existence of minimally \texorpdfstring{$\boldsymbol{t}$}{}-tough, split graphs for \texorpdfstring{$\boldsymbol{t > 1/2}$}{}}

In this section, we re-prove the following theorem.

\begin{theorem}[Katona and Varga~\cite{article:spec_graph_classes_journal}] \label{thm:split_graphs}
 For any rational number $t > 1/2$, there exists no minimally $t$-tough, split graph.
\end{theorem}

Our proof relies on a lemma of Katona and Varga~\cite{article:spec_graph_classes_journal}.

\begin{lemma}[Katona and Varga~\cite{article:spec_graph_classes_journal}] \label{claim:split_mintoughset}
 Let $t$ be a positive rational number and $G$ a minimally $t$-tough, split graph partitioned into a clique $Q$ and an independent set $I$. Let $e = uv$ be an edge between two vertices of $Q$ and $S = S(e) \subseteq V(G)$ a vertex set guaranteed by \cref{claim:minttoughlemma}. Then 
 \[ S = \big( Q \setminus \{ u, v \} \big) \cup \{ w \in I \mid uw,vw \in E(G) \} . \]
\end{lemma}

We are now ready to present our alternative proof of \cref{thm:split_graphs}, using the concept of moplexes.

\begin{proof}[Proof of \cref{thm:split_graphs}]
 Suppose to the contrary that there exists a minimally $t$-tough, split graph $G$ for some rational number $t>1/2$. Since every split graph is chordal, we can assume by \cref{thm:minttough_chordal_with_t_between_1/2_and_1} that $t > 1$.
 Then $G$ is connected and noncomplete, otherwise its toughness would be either 0 or infinity, and not $t$.
 Since $G$ is a split graph, its vertex set can be partitioned into a clique $Q$ and an independent set $I$.
 We can assume that in this partition, $Q$ is inclusion-wise maximal, i.e., none of the vertices of $I$ is adjacent to every vertex of $Q$. 
 
 \begin{claim} \label{degree3}
  Every vertex of $G$ has degree at least $3$.
 \end{claim}
 \begin{proof}
  By \cref{claim:connectivity_and_toughness}, every vertex of $G$ has degree at least $\kappa(G)\ge \lceil 2t \rceil \ge 3$, where the last inequality holds since $t > 1$.
 \end{proof}

 \begin{claim} \label{I2}
  We have $|I| \ge 2$.
 \end{claim}
 \begin{proof}
  Since $G$ is noncomplete, we have $I\neq \emptyset$.
  Furthermore, if $|I|=1$, then the neighbors of the only vertex in $I$ would be universal (and by \cref{degree3}, the only vertex in $I$ indeed has neighbors), contradicting \cref{thm:chordal_universal}.
 \end{proof}

 \begin{claim} \label{Q3}
  We have $|Q| \ge 3$.
 \end{claim}
 \begin{proof}
  By~\cref{I2}, there exists a vertex $v\in I$.
  By~\cref{degree3} and by $N(v)\subseteq Q$, we infer that $|Q|\ge 3$.
 \end{proof}

 \begin{claim} \label{Imoplex}
  Each vertex of $I$ forms a moplex.
 \end{claim}
 \begin{proof}
  Let $v \in I$ be an arbitrary vertex. Obviously, $\{ v \}$ is a clique module, and now we show that it is an inclusion-wise maximal one. So suppose to the contrary that there exists a clique module $M$ such that $v \in M$ and $M \ne \{ v \}$. Since $M$ is a clique and none of the vertices of $I$ is adjacent to every vertex of $Q$, clearly $M \setminus \{ v \} \subseteq N(v) \subsetneq Q$ holds. Consider two arbitrary vertices $u \in M \setminus \{ v \}$ and $w \in Q \setminus N(v)$. Then we clearly have $vw \notin E(G)$. Since $M \setminus \{ v \} \subsetneq Q$ and $Q$ is a clique, we have $uw \in E(G)$. However, since $M$ is a clique module with $u,v \in M$ and $w \notin M$, this is a contradiction. So $\{ v \}$ is indeed an inclusion-wise maximal clique module. 
  Since $v$ has a non-neighbor in $Q$, its neighborhood $N(v)$ is a minimal separator. 
  Therefore, $\{ v \}$ is a moplex. Thus each vertex of $I$ is moplicial. 
 \end{proof}

 For any $v \in Q$, let $I_v = N(v) \cap I$. 
 
 \begin{claim} \label{IuIv}
  For any $u,v \in Q$, $u \ne v$, we have $|I_u \cup I_v| \le 3$.
 \end{claim}
 \begin{proof}
  \cref{I2} implies that $Q$ is a separator, and since $G$ is $t$-tough, we have
  \[ |I| = \omega(G-Q) \le \frac{|Q|}{t} . \]

  Let $u,v \in Q$ be two distinct vertices. Since $Q$ is a clique, we have $uv \in E(G)$. Let $S = S(uv) \subseteq V(G)$ be a vertex set guaranteed by \cref{claim:minttoughlemma}.
  Then, by \cref{claim:split_mintoughset}, we have
  \[ S = \big( Q \setminus \{ u, v \} \big) \cup \{ w \in I \mid uw,vw \in E(G) \} = \big( Q \setminus \{ u, v \} \big) \cup \big(I_u \cap I_v\big) . \]
  Then by \cref{claim:minttoughlemma}, we obtain
  \[ \frac{|Q| - 2 + |I_u \cap I_v|}{t} = \frac{|S|}{t} < \omega \big( (G-uv)-S \big) . \]
 
  Note that the components of $G-S$ are of two types: there is the component that contains $u$ and $v$ (and their neighbors they do not share), and there are single-vertex components formed by some vertices in $I$ (which are adjacent to neither $u$ nor $v$).
  Thus, by \cref{bridge}, we obtain
  \[ \omega \big( (G-uv)-S \big) = \omega(G-S) + 1 = \big( |I| - |I_u \cup I_v| + 1 \big) + 1 = |I| - |I_u \cup I_v| + 2 . \]
 
  By combining the above inequalities, we get
  \[ |I| \le \frac{|Q|}{t} = \frac{|Q| - 2 +|I_u \cap I_v|}{t} + \frac{2 - |I_u \cap I_v|}{t} < |I| - |I_u \cup I_v| + 2 + \frac{2 - |I_u \cap I_v|}{t} . \]
  Therefore, we have
  \[ |I_u \cup I_v| < 2 + \frac{2 - |I_u \cap I_v|}{t} \le 2 + \frac{2}{t} < 4 , \]
  where the last inequality is valid since $t > 1$. Hence, $|I_u \cup I_v| \le 3$.
 \end{proof}
 
 \begin{claim} \label{Iu2}
  There exists a vertex $u\in Q$ such that $|I_u| = 2$.
 \end{claim}
 \begin{proof}
  \cref{IuIv} implies that $|I_v| \le 3$ holds for every $v \in Q$. Moreover, if $|I_v| = 3$ for some $v \in Q$, then $|I| = 3$, implying that $v$ is a universal vertex, which contradicts \cref{thm:chordal_universal}. Therefore, $|I_v| \le 2$ holds for every $v \in Q$.

  Suppose to the contrary that $|I_v| \leq 1$ holds for all $v \in Q$. Then by \cref{I2} and by the connectivity of $G$, there exists $w \in Q$ with $I_w \ne \emptyset$. Let $x \in I_w$. Then $w$ is a maximum neighbor of $x$, contradicting \cref{Imoplex} and \cref{thm:no_mintough_chordal_graph_with_special_vertices}.
 \end{proof}
 
 Let $u \in Q$ be a vertex for which $|I_u| = 2$; by \cref{Iu2}, such a vertex exists.
 Let $I_u = \{ a,b \}$, and let $A = N(a)$ and $B = N(b)$. 
 Obviously, $u \in A \cap B$; thus $\{ a, b \} \subseteq N(A) \cap I$ and $\{ a, b \} \subseteq N(B) \cap I$. 
 In addition, $N(A) \cap I \ne \{ a, b \}$, otherwise $u$ would be a maximum neighbor of $a$, contradicting \cref{Imoplex} and \cref{thm:no_mintough_chordal_graph_with_special_vertices}.
 Similarly, $N(B) \cap I \ne \{ a, b \}$. 
 
 Let $c \in \big( N(A) \cap I \big)\setminus \{ a, b \}$.
 Then clearly, $c$ has a neighbor in $A$; let $v$ be such a neighbor of $c$.
 Now, $N(B) \cap I \ne \{ a, b, c \}$, otherwise $N(B) \cap I \ne \{ a, b \}$ and $I_u = \{ a, b \}$ would imply that $c$ has a neighbor $v'$ in $B \setminus \{ u \}$, and then, the edge $uv'$ would be a maximum neighboring edge of $b$, contradicting \cref{Imoplex} and \cref{thm:no_mintough_chordal_graph_with_special_vertices}.
 Let $d \in \big( N(B) \cap I \big)\setminus\{ a, b, c \}$.
 Then clearly, $d$ has a neighbor in $B$; let $w$ be such a neighbor of $d$.
 Now $a, c \in I_v$ and $b, d \in I_w$, where $a,b,c,d$ are four distinct vertices, contradicting~\cref{IuIv}.
\end{proof}

\end{document}